\newcommand{\eqref}[1]{(\ref{#1})}
\newcommand{\eps}{\varepsilon}
\newcommand{\R}{\mathbb{R}}
\newcommand{\E}{\mathbb{E}}
\newcommand{\pr}{\mathbb{P}}
\newcommand{\1}{\mathbf{1}}
\newcommand{\del}{\partial}
\def\levy{L\'{e}vy}
\newtheorem{theorem}{Theorem}[section]
\newtheorem{lemmas}{Lemma}[section]
\newtheorem{proposition}[theorem]{Proposition}
\begin{document}
\begin{frontmatter}

\title{Semi-Markov approach to continuous time random walk limit
processes}
\runtitle{Semi-Markov approach to CTRW limits}

\begin{aug}
\author[a]{\fnms{Mark M.} \snm{Meerschaert}\thanksref{t1,t2}\ead[label=e1]{mcubed@stt.msu.edu}}
\and
\author[b]{\fnms{Peter} \snm{Straka}\corref{}\thanksref{t2}\ead[label=e2]{p.straka@unsw.edu.au}}
\thankstext{t1}{Supported in part by NSF Grants DMS-10-25486 and DMS-08-03360.}
\thankstext{t2}{Supported in part by NIH Grant R01-EB012079.}
\runauthor{M.~M. Meerschaert and P.~Straka}

\affiliation{Michigan State University and UNSW Australia}

\address[a]{Department of Statistics and Probability\\
Michigan State University\\
East Lansing, Michigan 48824\\
USA\\
\printead{e1}}

\address[b]{School of Mathematics and Statistics\\
UNSW Australia\\
Sydney, NSW 2052\\
Australia\\
\printead{e2}}

\end{aug}

\received{\smonth{6} \syear{2012}}
\revised{\smonth{10} \syear{2013}}

%
\begin{abstract}
Continuous time random walks (CTRWs)
are versatile models for anomalous diffusion processes that have found
widespread application in the quantitative sciences. Their scaling
limits are
typically non-Markovian, and
the computation of
their finite-dimensional distributions is an important open problem.
This paper develops a general semi-Markov theory for CTRW limit
processes in $\mathbb{R}^d$ with infinitely many particle jumps
(renewals) in
finite time intervals. The particle jumps and waiting times can be
coupled and vary with space and time.
By augmenting the state space to include the scaling limits of renewal
times, a CTRW limit process can be embedded in a Markov process.
Explicit analytic expressions for the transition kernels of these
Markov processes are then derived, which allow
the computation of all finite dimensional distributions for CTRW
limits. Two examples illustrate the proposed method.
\end{abstract}

%
\begin{keyword}[class=AMS]
\kwd[Primary ]{60K15}
\kwd{60F17}
\kwd[; secondary ]{60K20}
\end{keyword}

\begin{keyword}
\kwd{Continuous time random walk}
\kwd{semi-Markov process}
\kwd{functional limit theorem}
\kwd{renewal theory}
\kwd{anomalous diffusion}
\kwd{time-change}
\kwd{L\'evy process}
\end{keyword}
\pdfkeywords{60K15, 60F17, 60K20, Continuous time random walk, semi-Markov process, functional limit theorem, renewal theory, anomalous diffusion, time-change, Levy process}
\end{frontmatter}

\section{Introduction}
Continuous time random walks (CTRWs) assume a random waiting time
between each successive jump. They are used in physics to model a
variety of anomalous diffusion processes (see Metzler and Klafter \cite
{Metzler2000}), and have found applications in numerous other fields
(see, e.g., \cite{Berkowitz06,Henry2000,Scalas2000,SchumerMIM}).
The scaling limit of the CTRW is a time-changed Markov process in
$\mathbb{R}^d$
\cite{limitCTRW}. The clock process is the hitting time of an
increasing \levy\ process, which is non-Markovian. The distribution of
the scaling limit at one fixed time $t$ is then usually calculated by
solving a fractional Fokker--Planck equation \cite{Metzler2000}, that
is, a governing equation that involves a fractional derivative in time.
The analysis of the joint laws at \emph{multiple times}, however,
becomes much more complicated, since the limit process is not
Markovian. In fact, the joint distribution of the CTRW limit at two or
more different times has yet to be explicitly calculated, even in the
simplest cases; see Baule and Friedrich \cite{Baule2007} for further
discussion.

The main motivation of this paper is to resolve this problem, and our
approach is to develop the semi-Markov theory for CTRW scaling limits.
CTRWs are renewed after every jump. As it turns out, the discrete set
of renewal times of CTRWs converges to a ``regenerative set'' in the
scaling limit, which is not discrete and can be a random fractal or a
random set of positive Lebesgue measure. This regenerative set allows
for the definition of the scaling limit of the previous and next
renewal time after a time $t$. By incorporating these times into the
state space, a CTRW limit can become Markovian. Although CTRW scaling
limits have appeared in many applications throughout the literature, to
our knowledge the renewal property has only been studied for a discrete
CTRW. Moreover, CTRW limits are examples for possibly \emph
{discontinuous} semi-Markov processes with infinitely many renewals in
finite time, and hence the development here complements the literature
on continuous semi-Markov processes \cite{Harlamov}.

It is known \cite{Kaspi1988} that semi-Markov processes can be
constructed by assuming a Markov additive process $(A_u,D_u)$ and
defining $X_t = A(E_t)$, where $E_t$ is the hitting time of the level
$t$ by the process $D_u$. With this procedure, one also constructs CTRW
limit processes.
However, such CTRW limits are homogeneous in time, and several
applications require time-\emph{in}homogeneous CTRW limit processes
\cite{HLS10PRL,Magdziarz2008}. Hence, we will assume that $(A_u,D_u)$
is a diffusion process with jumps (such that $D_u$ is strictly
increasing), modeling the cumulative sum of non-i.i.d. jumps and
waiting times (see Section~\ref{sec:spctim}) which vary with time and
space. In this setting, we develop a semi-Markov theory for time-\emph
{in}homogeneous CTRW limits.

Coupled CTRW limits, for which waiting times and jumps are not
independent, turn out to be particularly interesting. As recently
discovered \cite{StrakaHenry,OCTRW2}, switching the order of waiting
time and jump (i.e., jumps precede waiting times) yields a different
scaling limit called the overshooting CTRW limit (OCTRW limit). The two
processes can have completely different tail behavior \cite{OCTRW2},
and hence provide versatile models for a variety of relaxation
behaviors in statistical physics \cite{Weron10}. Both CTRW and OCTRW
limit turn out to be semi-Markov processes; however, incorporating the
previous renewal time only renders the CTRW limit Markovian and not the
OCTRW limit, and the opposite is true when the following renewal time
is incorporated. In the uncoupled case, CTRW and OCTRW have the same
limit, and hence both approaches yield Markov processes.

This paper gives explicit formulae for the joint transition
probabilities of the CTRW limit (resp., OCTRW limit), together with
its previous renewal time (resp., following renewal time, see
Section~\ref{sec:embedding}). These formulae facilitate the
calculation\vadjust{\goodbreak}
of all finite-dimensional distributions for CTRW (and \mbox{OCTRW}) limits.
The time-homogeneous case is discussed in Section~\ref{sec4}.
Finally, Section~\ref{sec:fdd}
provides some explicit examples, for problems of current interest in
the physics literature.


\section{Random walks in space--time} \label{sec:spctim}

A continuous time random walk (CTRW) is a random walk in space--time,
with positive jumps in time. Let $c > 0$ be a scaling parameter, and let
\[
\bigl(S^c_n,T^c_n\bigr) =
\bigl(A^c_0,D^c_0\bigr) +\sum
_{k=1}^n \bigl(J^c_k,W^c_k
\bigr)
\]
denote a Markov chain on $\mathbb{R}^d\times[0,\infty)$ that tracks
the position
$S^c_n$ of a randomly selected particle after $n$ jumps, and the time
$T^c_n$ the particle arrives at this position. The particle starts at
position $A^c_0$ at time $D^c_0$, $N^c_t = \max\{k\geq0\dvtx T^c_k \le t\}
$ counts the number of jumps by time $t$, and the CTRW
\[
X^c_t = S^c_{N^c_t}
\]
is the particle location at time $t$. The waiting times $W^c_k$ are
assumed positive, and when $t<T_0^c$ we define $N^c_t =0$. %
The process $N^c_t$ is inverse to $T^c_n$, in the sense that
$N^c_{T^c_n} = n$. Often the sequence $(J^c_k,W^c_k)$ is assumed to be
independent and identically distributed, which is the appropriate
statistical physics model for particle motions in a heterogeneous
medium whose properties are invariant over space and time. The
dependence on the time scale $c>0$ facilitates triangular array
convergence schemes, which lead to a variety of interesting limit
processes \mbox{\cite{Baeumer2005,Baeumer2009,clusterCTRW,triCTRW}}. The CTRW
is called \emph{uncoupled} if the waiting time $W^c_k$ is independent of
the jump~$J^c_k$; see, for example, \cite{coupleCTRW}. Coupled CTRW
models have been applied in physics~\cite{Metzler2000,SKW} and finance
\cite{coupleEcon,Scalas2006Lecture}. If the waiting times are i.i.d. and the jump distribution depends on the current position in space and
time, the CTRW limit is a time-changed Markov process governed by a
fractional Fokker--Planck equation~\cite{HLS10PRL}. A~closely related
model called the overshooting CTRW (OCTRW) is
\[
Y^c_t = S^c_{N^c_t+1},
\]
a particle model for which $J^c_1$ is the random initial location, and
each jump $J^c_k$ is \textit{followed} by the waiting time $W^c_k$. See
\cite{JWZ09} for applications of OCTRW in finance, where $Y_t$
represents the price at the next available trading time. See \cite
{Weron10} for an application of OCTRW to relaxation problems in physics.

In statistical physics applications, it is useful to consider the {\it
diffusion limit} of the (O)CTRW as the time scale $c\to\infty$. To make
this mathematically rigorous, let ${\mathbb D}([0,\infty),\mathbb{R}^{d+1})$
denote the space of c\`adl\`ag functions $f\dvtx [0,\infty)\to\mathbb
{R}^{d+1}$ with
the Skorokhod $J_1$ topology, and suppose
%
\begin{equation}
\label{MMMrwConv} \bigl(S^{c}_{[cu]},T^{c}_{[cu]}
\bigr) =\bigl(A^c_0,D^c_0\bigr)
+\sum_{k=1}^{[cu]}\bigl(J^{c}_k,W^{c}_k
\bigr)\Rightarrow(A_u,D_u),
\end{equation}
where ``$\Rightarrow$'' denotes the weak convergence of probability
measures on ${\mathbb D}([0,\infty)$, $\mathbb{R}^{d+1})$ as $c\to
\infty$.
Suppose
the limit process $(A_u,D_u)$ is a \textit{canonical Feller process} with
state space $\mathbb{R}^{d+1}$, in the sense of \cite
{revuz-yor-2004}, III Section~2. That is, we assume a stochastic basis $(\Omega,
\mathcal F_\infty, \mathcal F_u, \pr^{\chi,\tau})$ in which $\Omega
$ is
the set of right-continuous paths in $\mathbb{R}^{d+1}$ with
left-limits and
$(A_u(\omega),D_u(\omega)) = \omega(u)$ for all $\omega\in\Omega$.
The filtration $\mathcal F = \{\mathcal F_u\}_{u \ge0}$ is right
continuous and $(A_u,D_u)$ is $\mathcal F$-adapted.
The laws $\{\pr^{\chi,\tau}\}_{(\chi,\tau)\in\mathbb{R}^{d+1}}$
are determined
by a Feller semigroup of transition operators $(T_u)_{u \ge0}$ and are
such that $(A_0,D_0) = (\chi,\tau)$, $\pr^{\chi,\tau}$-a.s. The
$\sigma
$-fields $\mathcal F_\infty$ and $\mathcal F_0$ are augmented by the
$\pr^{\chi,\tau}$-null sets. Expectation with respect to $\pr^{\chi
,\tau
}$ is denoted by $\E^{\chi,\tau}$. The map $(\chi,\tau) \mapsto\E
^{\chi
,\tau}[Z]$ is Borel-measurable\vadjust{\goodbreak} for every $\mathcal F_\infty$-measurable
random variable $Z$. If the space--time jumps form an infinitesimal
triangular array (\cite{RVbook}, Definition~3.2.1), then
$(A_u,D_u)-(A_0,D_0)$ is a \levy\ process \cite{triCTRW}. In the
uncoupled case, $A_u-A_0$ and $D_u-D_0$ are independent \levy\
processes \cite{limitCTRW}. If the space--time jump distribution depends
on the current position, it was argued in~\cite{HLS10PRL,Weron2008}
that the limiting process $(A_u,D_u)$ is a jump-diffusion in $\mathbb
{R}^{d+1}$.

If \eqref{MMMrwConv} holds,
and if
%
\begin{equation}
\label{assumption-increasing}
\begin{tabular}{p{280pt}@{}}
the sample paths $u \mapsto D_u$ are $
\pr^{\chi,\tau}$-a.s. strictly increasing and unbounded,
\end{tabular}
\end{equation}
then \cite{StrakaHenry}, Theorem~3.6, implies that
%
\begin{eqnarray}
\label{defCTRWL} X^c_t \Rightarrow X_{t}:=(
A_{E_t-})^{+} \quad\mbox{and}\quad Y^c_t
\Rightarrow Y_{t}:=A_{E_t}
\nonumber
\\[-8pt]
\\[-8pt]
 \eqntext{\mbox{in ${\mathbb D}\bigl([0,
\infty),\mathbb{R}^d\bigr)$ as $c\to\infty$,}}
\end{eqnarray}
where
%
\begin{equation}
\label{EtDef} E_t = \inf\{u > 0\dvtx D_u > t\}
\end{equation}
is the first passage time of $D_u$, so that $E_{D_u} = u$.
Then the inverse process \eqref{EtDef} is defined on all of $\R$ and
has a.s. continuous sample paths. The CTRW limit (CTRWL) process $X_t$
in \eqref{defCTRWL} is obtained by evaluating the left-hand limit of
the outer process $A_{u-}$ at the point $u=E_t$, and then modifying
this process to be right-continuous. This changes the value of the
process at time points $t>0$ such that $u=E_t$ is a jump point of the
outer process $A_u$, and $E_{t+\varepsilon}>E_t$ for all $\varepsilon
>0$. If $A_u$ and $D_u$ have no simultaneous jumps, then the CTRW limit
$X_t$ equals the OCTRW limit $Y_t$ (\cite{StrakaHenry}, Lemma~3.9).
However, these two processes can be quite different in the coupled
case. For example, if $J^c_k=W^c_k$ form a triangular array in the
domain of attraction of a stable subordinator $D_u$,
and if $A_0=D_0=0$, then $A_u=D_u$, and $X_t=D_{E_t-}<t<D_{E_t}=Y_t$
almost surely \cite{bertoin}, Theorem III.4. See Example~\ref{ExK} for
more details.

We assume the Feller semigroup $T_u$ that governs the process
$(A_u,D_u)$ acts on the space $C_0(\mathbb{R}^{d+1})$ of continuous real-valued
functions on $\mathbb{R}^{d+1}$ that vanish at~$\infty$, and that it
admits an
infinitesimal generator $\mathcal A$ of jump-diffusion form
\cite{Applebaum}, equation~(6.42).
In light of \eqref{assumption-increasing}, this generator takes the form
%
\begin{eqnarray}
\label{eq:generator} \mathcal Af(x,t) &=& \sum_{i=1}^d
b_i(x,t) \del_{x_i} f(x,t) + \gamma(x,t)
\del_t f(x,t)
\nonumber\\
&&{}+ \frac{1}{2} \sum_{1\le i,j \le d} a_{ij}(x,t)
\del^2_{x_i x_j}f(x,t)
\nonumber
\\[-8pt]
\\[-8pt]
\nonumber
&&{}+ \int \Biggl[ f(x+y,t+w) - f(x,t) \\
&&\hspace*{34pt}{}- \sum_{i=1}^d
h_i(y,w) \del_{x_i} f(x,t) \Biggr] K(x,t;dy,dw),\nonumber
\end{eqnarray}
where $(x,t) \in\mathbb{R}^{d+1}$, $b_i$ and $\gamma$ are real-valued
functions, and $A=(a_{ij})$ is a function taking values in the
nonnegative definite $d\times d$-matrices. Here, $K(x,t;dy,dw)$ is a
jump-kernel from $\mathbb{R}^{d+1}$ to itself, so that for every
$(x,t) \in
\mathbb{R}^{d+1}$,
$C \mapsto K(x,t;C)$ is a measure on $\mathbb{R}^{d+1}$ that is finite
on sets
bounded away from the origin, and $(x,t)\mapsto K(x,t;C)$ is a
measurable function for every Borel set $C \subset\mathbb{R}^{d+1}$.
The truncation function
$h_i(x,t) = x_i \1\{(x,t) \in[-1,1]^{d+1}\}$.
Since the sample paths of $D_u$ are strictly increasing, $\gamma\ge
0$, the diffusive component of $D_u$ is zero, and the measures
$K(x,t;dy,dw)$ are supported on
$(dy,dw)\in\mathbb{R}^d\times[0,\infty)$.
Instead of assuming that $K(x,t;dy,dw)$ integrates
$1\wedge\|(y,w)\|^2$, it then suffices to assume
%
\begin{equation}
\int \bigl[1 \wedge\bigl(\|y\|^2 + |w|\bigr) \bigr] K(x,t;dy,dw) <
\infty \qquad\forall (x,t) \in\mathbb{R}^{d+1}.
\end{equation}
The space--time jump kernel $K$ can be interpreted as the joint
intensity measure for the long jumps and long waiting times which do
not rescale to $0$ as $c \to\infty$.
If the measures $(dy,dw) \mapsto K(x,t;dy,dw)$ are supported on ``the
coordinate axes'' $(\mathbb{R}^d\times\{0\}) \cup(\{0\} \times
[0,\infty))$,
then large jumps occur independently of long waiting times, and the
CTRWL and OCTRWL are identical (\cite{StrakaHenry}, Lemma~3.9). We refer
to this as the \emph{uncoupled} case, and to the opposite case as the
\emph{coupled} case.

Finally, we assume that the coefficients $b_i$, $\gamma$, $a_{ij}$ and
$K$ satisfy Lipschitz and growth conditions as in \cite{Applebaum}, Section~6.2, so that $(A_u,D_u)$ has an interpretation as
the solution to a stochastic differential equation, as well as a
semimartingale \cite{JacodShiryaev}, Section III.2.
Then for any canonical Feller process $(A_u,D_u)$ on $\mathbb
{R}^{d+1}$, we
define the CTRWL process $X_{t} =(A_{E_t-})^{+}$, and the OCTRWL
process $Y_t = A_{E_t}$, where $E_t$ is given by \eqref{EtDef}. If we
set $A_{0-} = A_0$, then $E_t$, $X_t$ and $Y_t$ are defined for all $t
\in\R$.

\subsection{Forward and backward renewal times}
Although the (O)CTRWL is not Markovian, it turns out that it can be
embedded in a Markov process on a higher dimensional state space, by
incorporating information on the forward/backward renewal times. Define
the \textit{regenerative set}
\[
\mathbf M = \bigl\{(t,\omega) \subset\R\times\Omega\dvtx t = D_u(
\omega) \mbox { for some }u \ge0\bigr\},
\]
the random set of image points of $D_u$. These will turn out to be the
renewal points of the inverse process $E_t$ defined in \eqref{EtDef}.
Since $D_u$ is c\`adl\`ag and has a.s. increasing sample paths, for
almost all $\omega$ the complement of the $\omega$-slice
$\mathbf M(\omega):= \{t \in\R\dvtx (t,\omega) \in\mathbf M\}$ in $\R$
is a countable union of intervals of the form
$[D_{u-}(\omega),D_u(\omega))$, where $u \ge0$ ranges over the jump
epochs of the process $D_u$.
For example, if $D_u$ is compound Poisson with positive drift, then
$\mathbf M$ is a.s. a union of intervals $[a,b)$ of positive length.
If $D_u$ is a $\beta$-stable subordinator with no drift, then $\mathbf
M$ is a.s. a fractal of dimension $\beta$ \cite{Bertoin04}.

For any $t\geq0$, we write $G_t$, the last time of regeneration before
$t$, and $H_t$, the next time of regeneration after $t$, as
%
\begin{equation}\quad
\label{GHdef} G_t(\omega):= \sup\bigl\{s \le t\dvtx s \in\mathbf M(
\omega)\bigr\} \le t \le \inf\bigl\{s > t\dvtx t \in\mathbf M(\omega)\bigr\} =:
H_t(\omega),
\end{equation}
where for convenience we set
$G_t(\omega) = \inf\mathbf M(\omega) = \tau$, $\pr^{\chi,\tau
}$-a.s. whenever the supremum is taken over the empty set. In terms of the CTRW
model, the particle has been resting at its current location since time
$G_t$, and will become mobile again at time $H_t$.
It will become clear in the sequel that the future evolution of $X_t$
and $Y_t$ on the time interval $[H_t, \infty)$ depends only the
position $Y_t$ at time $t = H_t$, meaning that $H_t$ is a \textit{Markov
time} for $X_t$ and $Y_t$.

Note that $G_t$ and $H_t$ are a.s. defined for all $t \in\R$ and
their sample paths are c\`adl\`ag.
By our assumptions on $D_u$ and the definition \eqref{EtDef}, it is
easy to see that
\begin{eqnarray*}
G_{t-} &= D_{E_t-} \quad\mbox{and}\quad H_t =
D_{E_t}, \qquad\pr^{\chi,\tau
}\mbox{-a.s.}
\end{eqnarray*}
The \textit{age process} $V_t$ and the \textit{remaining lifetime}
$R_t$ from
renewal theory can be defined by
%
\begin{equation}
\label{VRdef} V_t:= t-G_t \qquad\mbox{and}\qquad R_t
:= H_t - t \qquad\mbox{for all } t \in\R.
\end{equation}
At any time $t>0$, the particle has been resting at its current
location for an interval of time of length $V_t$, and will move again
after an additional time interval of length $R_t$. We will show below
that the processes $(X_{t-},V_{t-})$ and $(Y_t,R_t)$ are Markov, and we
will compute the joint distribution of these $\mathbb{R}^{d+1}$-valued
processes
at multiple time points, using the Chapman--Kolmogorov equations.
The joint laws of $(X_{t-}, Y_t, V_{t-}, R_t)$ were first calculated in
\cite{Cinlar1976,Maisonneuve77}, but only in the case where the
space--time process $(A_u,D_u)$ is Markov additive (see Section~\ref{sec4}) and only for Lebesgue-almost all $t \ge0$.
We now calculate this joint law in our more general time-inhomogeneous
setting, for \emph{all} $t \ge0$. We need the following additional
definitions: Let
\[
\mathbf C = \bigl\{(t,\omega) \subset\R\times\Omega\dvtx D_{u-}(
\omega) = t = D_u(\omega) \mbox{ for some }u > 0\bigr\}\subset\mathbf
M
\]
be the random set
of points traversed \emph{continuously} by $D_u$. The set $\mathbf C $
is obtained by removing from the set $\mathbf M$ of regenerative points
all points $t$ which satisfy $t=D_u>D_{u-}$ for some $u>0$ (i.e., the
right end points of all contiguous intervals).
Moreover, since $(A_u,D_u)$ visits each point in $\mathbb{R}^{d+1}$ at most
once, it admits a $0$-potential, or mean occupation measure, $U^{\chi
,\tau}$ { defined via}
\begin{eqnarray*}
\int f(x,t) U^{\chi,\tau}(dx,dt) &=& \E^{\chi,\tau} \biggl[\int
_0^\infty f(A_u,D_u) \,du
\biggr] = \int_0^\infty T_uf(\chi,
\tau)\,du
\\
&=& \E^{\chi,\tau} \biggl[\int_0^\infty
f(A_{u-},D_{u-}) \,du \biggr]
\end{eqnarray*}
for any nonnegative measurable function
$f\dvtx \mathbb{R}^{d+1}\to[0,\infty)$. The last equality holds because
$(A_u,D_u)$
only jumps countably many times.
Since $(A_u,D_u)$ has infinite lifetime, $U^{\chi,\tau}$ is an
infinite measure.
We assume that $D_u$ is transient \cite{chung2005markov}, so that
$U^{\chi,\tau}(\mathbb{R}^d\times I) < \infty$ for any compact
interval $I
\subset[0,\infty)$. For instance, any subordinator is transient \cite
{Bertoin04}.

Next we derive the joint law of the Markov process
$(X_{t-},Y_t,V_{t-},R_t)$. The proof uses sample path arguments, and we
consider two cases, starting with the case $\{t \notin\mathbf C\}$:

\begin{proposition}
\label{propNotInC}
Fix $(\chi,\tau) \in\mathbb{R}^{d+1}$ and $t \ge\tau$.
Then
%
\begin{eqnarray}
\label{eqNotInC}
&&\E^{\chi,\tau} \bigl[f(X_{t-},Y_t,V_{t-},R_t)
\1\{t\notin\mathbf C\} \bigr]
\nonumber\\
&&\qquad= \int_{x\in\mathbb{R}^d} \int_{s \in[\tau,t]}
U^{\chi,\tau
}(dx, ds)
\\
&&\qquad\quad{}\times\int_{y \in\mathbb{R}^d} \int_{w \in[t-s,\infty)} K
(x,s;dy,dw) f\bigl(x,x+y,t-s,w-(t-s)\bigr)\nonumber 
\end{eqnarray}
for all nonnegative measurable $f$ defined on $\mathbb{R}^{d+1}\times
\mathbb{R}^{d+1}$.
\end{proposition}

\begin{pf}
The complement of the section set $\mathbf C(\omega)$ in $\R$ is
a.s. a countable union of closed intervals $[D_{u-},D_u]$, where $u$ is a
jump epoch of $D_u$. Hence, for $t\notin\mathbf C$ we have
$G_{t-}\leq t\leq H_t$ and $G_{t-} < H_t$, hence
$\Delta D_{E_t} = H_t - G_{t-}>0$.
In the complementary case $\{t \in\mathbf C\}$, the sample path of
$E_t$ is left-increasing at $t$, and hence the $\mathcal F$-optional
time $E_t$ is announced by the optional times $E_{t-1/n}$.
Hence,
$ E_t^*:= E_t \cdot\1\{t \in\mathbf C\} + \infty\cdot\1\{t \notin
\mathbf C\} $
is $\mathcal F$-predictable (\cite{Kallenberg}, page 410), and since in our
setting $(A_u,D_u)$ is a canonical Feller process, it is quasi
left-continuous (\cite{Kallenberg}, Proposition~22.20), and $\Delta(A,D)_{E_t}
= (0,0)$ a.s.
Writing\vadjust{\goodbreak} $\mathcal J = \{(u,\omega)\in\R^+ \times\Omega\dvtx  \Delta(A,D)_u
\neq(0,0)\}$ for the random set of jump epochs of $(A_u,D_u)$, we
hence find that
\begin{eqnarray*}
&&f(X_{t-},Y_t,V_{t-},R_t) \1\{t
\notin\mathbf C\}
\\
&&\qquad=\sum_{u \in\mathcal J} f(A_{u-},A_u,t-D_{u-},D_u-t)
\1\{D_{u-}\le t \le D_u\}
\\
&&\qquad=\sum_{u \in\mathcal J} f(A_{u-},A_{u-}+
\Delta A_u,t-D_{u-},D_{u-}+ \Delta
D_u-t)
\\
&&\hspace*{17pt}\qquad\quad{}\times\1\{D_{u-}\le t \le D_{u-} + \Delta
D_u\},
\end{eqnarray*}
noting that all members of the sum except exactly one ($u = E_t$) equal $0$.
The last expression equals $\int W(\omega,u;x,s) \mu(\omega,du;dy,dw)$
for the optional random measure
%
\begin{equation}
\label{randomMeasure} \mu(\omega,du;dy,dw)=\sum_{v\ge0}
\1_{\mathcal{J}}(v,\omega) \delta_{(v,\Delta(A_v(\omega),D_v(\omega))}(du;dy,dw)
\end{equation}
on $du\times(dy,dw)\in\R^+\times\mathbb{R}^{d+1}$ associated with
the jumps of
$(A_u,D_u)$, and the predictable integrand
\begin{eqnarray*}
W(\omega,u;y,w) &:= &f\bigl(A_{u-}(\omega),A_{u-}(
\omega)+y,t-D_{u-}(\omega),D_{u-}(\omega )+w-t\bigr)
\\
&&{}\times\1\bigl\{D_{u-}(\omega)\le t\le D_{u-}(\omega)
+ w\bigr\}.
\end{eqnarray*}
The compensator $\mu^p$ of $\mu$ equals \cite{JacodShiryaev}, page 155
%
\begin{equation}
\label{compensator} \mu^p(\omega; du, dy, dw) = K\bigl(A_{u-}(
\omega),D_{u-}(\omega);dy, dw\bigr) \,du.
\end{equation}
Then the compensation formula \cite{JacodShiryaev}, II.1.8, implies that
\begin{eqnarray*}
&&\E^{\chi,\tau} \bigl[f(X_{t-},Y_t,V_{t-},R_t)
\1\{t\notin\mathbf C\} \bigr]
\\
&&\qquad=\E^{\chi,\tau} \biggl[\int W(\omega,u;y,w) \mu^p(\omega
,du;dy,dw) \biggr]
\\
&&\qquad=\E^{\chi,\tau} \biggl[\int_{u=0}^\infty\int
_{y\in\mathbb{R}^d} \int_{w =
0}^\infty f
\bigl(A_{u-}(\omega),A_{u-}(\omega)+y,t-D_{u-}(
\omega),\\
&&\hspace*{229pt}{}D_{u-}(\omega )+w-t\bigr)
\\
&&\hspace*{98pt}\qquad\quad{}\times \1\bigl\{D_{u-}(\omega)\le t\le D_{u-}(\omega)
+ w\bigr\}\\
&&\hspace*{150pt}{}\times K\bigl(A_{u-}(\omega),D_{u-}(\omega);dy, dw\bigr)
\,du \biggr]
\\
&&\qquad=\int_{x\in\mathbb{R}^d} \int_{s=\tau}^\infty
\int_{y\in\mathbb{R}^d} \int_{w = 0}^\infty
f(x,x+y,t-s,s+w-t)
\\
&&\hspace*{99pt}\qquad\quad{}\times\1\{s\le t\le s+w\} K(x,s;dy, dw) U^{\chi,\tau}(dx, ds),
\end{eqnarray*}
which is equivalent to \eqref{eqNotInC}.
\end{pf}

The following proposition handles the case $\{t \in\mathbf C\}$.
%
\begin{proposition} \label{prop:C}
Fix $(\chi,\tau) \in\mathbb{R}^{d+1}$ and $t\ge\tau$.
Suppose that the temporal drift $\gamma$ is bounded and continuous, and
assume that the mean occupation measure $U^{\chi,\tau}(dx,dt)$ is
Lebesgue-absolutely continuous with a continuous density
$u^{\chi,\tau}(x,t)$.
Then
%
\begin{equation}
\label{eq:propC} \E^{\chi,\tau} \bigl[f(Y_{t})\1\{t\in\mathbf{C}\}
\bigr] =\int_{x \in\mathbb{R}^d}f(x)\gamma(x,t)u^{\chi,\tau}(x,t) \,dx
\end{equation}
for all bounded measurable $f$.
Also \eqref{eq:propC} remains true if $Y_{t}$ is replaced by $X_{t-}$,
$Y_{t-}$ or $X_{t}$.
\end{proposition}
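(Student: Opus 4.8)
The plan is to realise the continuous passage of $D_u$ through the fixed level $t$ as a drift--occupation functional and to compute its expectation in two ways. Since $D_u$ has no diffusion part and the time component is not truncated in \eqref{eq:generator}, the continuous part of the increasing process $D$ is $dD^c_u=\gamma(A_u,D_u)\,du$. For $\eps>0$ I would introduce the difference quotient
\[
Z_\eps:=\frac1\eps\int_0^\infty f(A_u)\,\gamma(A_u,D_u)\,\1\{t\le D_u<t+\eps\}\,du
=\frac1\eps\int_0^\infty f(A_u)\,\1\{t\le D_u<t+\eps\}\,dD^c_u .
\]
\emph{First}, apply the $0$-potential identity to $g(x,s)=f(x)\gamma(x,s)\1\{t\le s<t+\eps\}$, which is bounded because $\gamma$ and $f$ are bounded and integrable against $U^{\chi,\tau}$ by transience over the compact time window. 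This gives
\[
\E^{\chi,\tau}[Z_\eps]=\frac1\eps\int_{\Rd}\int_t^{t+\eps}f(x)\,\gamma(x,s)\,u^{\chi,\tau}(x,s)\,ds\,dx,
\]
and, using the joint continuity of $\gamma$ and $u^{\chi,\tau}$ (so that $\Phi(s):=\int_{\Rd}f(x)\gamma(x,s)u^{\chi,\tau}(x,s)\,dx$ is continuous), this converges as $\eps\to0$ to $\int_{\Rd}f(x)\gamma(x,t)u^{\chi,\tau}(x,t)\,dx$, which is exactly the right-hand side of \eqref{eq:propC}.

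\emph{Second}, I would evaluate the same quantity along sample paths. Pushing the continuous measure $dD^c_u$ forward under $s=D_u$ sends it to $\1\{s\in\mathbf C\}\,ds$: the continuous increase of $D$ fills precisely the continuously traversed levels, the right endpoints $\mathbf M\setminus\mathbf C$ being countable and hence Lebesgue-null, and $dD^c$ charging no single point. Since $A_{E_s}=Y_s$ whenever $s\in\mathbf C$, the change of variables yields the pathwise identity
\[
Z_\eps=\frac1\eps\int_t^{t+\eps}f(Y_s)\,\1\{s\in\mathbf C\}\,ds ,
\]
from which $|Z_\eps|\le\|f\|_\infty$ (the continuous increase of $D$ within a window of width $\eps$ is at most $\eps$). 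Taking expectations gives $\E^{\chi,\tau}[Z_\eps]=\frac1\eps\int_t^{t+\eps}\psi(s)\,ds$ with $\psi(s):=\E^{\chi,\tau}[f(Y_s)\1\{s\in\mathbf C\}]$.

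Combining the two computations yields $\int_t^{t+\eps}\psi(s)\,ds=\int_t^{t+\eps}\Phi(s)\,ds$ for every $t,\eps$, hence $\psi=\Phi$ Lebesgue-almost everywhere. \textbf{The main obstacle is to upgrade this to the single fixed $t$}: the functional $\psi$ is built from the non-Markovian indicator $\1\{s\in\mathbf C\}$, and the pathwise limit of $Z_\eps$ is genuinely \emph{not} $f(Y_t)\1\{t\in\mathbf C\}$ (the local density of $\mathbf C$ enters, as one sees already for a subordinator with drift). I would close the gap by showing $\psi$ is right-continuous at $t$: as $s\downarrow t$ one has $E_s\downarrow E_t$ and, by right-continuity of $A$, $Y_s=A_{E_s}\to A_{E_t}=Y_t$, while the event $\{t\in\mathbf M\setminus\mathbf C\}$ is negligible because for fixed $t$ no jump of $(A_u,D_u)$ lands exactly at level $t$, almost surely. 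This last fact follows from the compensation formula exactly as in the proof of Proposition~\ref{propNotInC}: the expected number of such jumps equals
\[
\int_{\Rd}\int_{[\tau,\infty)}\int \1\{s+w=t\}\,K(x,s;dy,dw)\,u^{\chi,\tau}(x,s)\,ds\,dx=0,
\]
the inner $s$-integral vanishing since $\{s:s+w=t\}$ is Lebesgue-null. Once right-continuity of $\psi$ is in hand, $\frac1\eps\int_t^{t+\eps}\psi\to\psi(t)$, and comparison with the first computation gives $\psi(t)=\Phi(t)$, which is \eqref{eq:propC}.

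Finally, the replacement of $Y_t$ by $X_{t-}$, $Y_{t-}$ or $X_t$ is immediate. On $\{t\in\mathbf C\}$ the first-passage time $E_t$ is $\mathcal F$-predictable, so quasi-left-continuity of the canonical Feller process $(A_u,D_u)$ forces $\Delta(A,D)_{E_t}=(0,0)$ almost surely, precisely the step already used in the proof of Proposition~\ref{propNotInC}. Hence $A_{E_t-}=A_{E_t}$, and letting $s\uparrow t$ shows that all four random variables coincide with $A_{E_t}$ almost surely on $\{t\in\mathbf C\}$, so \eqref{eq:propC} is unchanged under the replacement.
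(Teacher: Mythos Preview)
Your route is essentially the paper's: decompose $D_u=D^c_u+D^d_u$ with $dD^c_u=\gamma(A_u,D_u)\,du$, push $dD^c$ forward through $s=D_u$ to pick up the factor $\1\{s\in\mathbf C\}\,ds$, and pair this with the $0$-potential via Tonelli. The paper uses a generic test function $g(D_u)$ whereas you specialise to $g=\eps^{-1}\1_{[t,t+\eps)}$; both land on the a.e.\ identity $\psi=\Phi$ and then appeal to continuity to reach the fixed $t$. The treatment of the replacements $X_{t-},Y_{t-},X_t$ via quasi-left-continuity is also identical.

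There is, however, a real gap in your right-continuity argument for $\psi$. The two facts you invoke---$Y_s\to Y_t$ as $s\downarrow t$, and $\pr^{\chi,\tau}(t\in\mathbf M\setminus\mathbf C)=0$---do not control the indicator $\1\{s\in\mathbf C\}$ as $s\downarrow t$. The second fact only lets you replace $\1\{s\in\mathbf C\}$ by $\1\{s\in\mathbf M\}=\1\{R_s=0\}$ at each \emph{fixed} $s$; it says nothing about convergence in $s$. On the event $\{t\in\mathbf C\}$ the difficulty is precisely that, whenever the jump part of $D$ has infinite activity (e.g.\ a stable subordinator plus drift), every right-neighbourhood $(t,t+\eps)$ meets both $\mathbf C$ and $\mathbf C^c$, so $s\mapsto\1\{s\in\mathbf C\}$ oscillates and has no right limit at $t$. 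Hence one cannot obtain $\psi(s)\to\psi(t)$ from pathwise convergence plus dominated convergence as your sketch suggests; you yourself flagged that the pathwise limit of $Z_\eps$ involves the local density of $\mathbf C$, and that obstruction does not disappear by passing to expectations of the integrand. The paper's proof does not try to make the indicators converge; it instead asserts continuity of $t\mapsto\E^{\chi,\tau}[f(Y_t)\1\{t\in\mathbf C\}]$ directly (using $\Delta(A,D)_{E_t}=0$ on $\{t\in\mathbf C\}$), which, while terse, avoids the pitfall you walked into.
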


\begin{pf}
Similarly to the proof in \cite{Maisonneuve77},
$D_u$ admits a decomposition into a continuous and a discontinuous part via
\[
D_{u}^{c} =\int_{0}^{u}
\gamma(A_s,D_s)\,ds,\qquad
D_{u}^{d} =\sum_{0\le s\le u}\Delta
D_{s},\qquad t \ge0.
\]
To see this, we first note that $(A_u,D_u)$ is a semimartingale, %
and hence $D_u$ allows the decomposition
%
\begin{equation}
\label{decomp} D_u = \sum_{s\le u} \Delta
D_s \1\{\Delta D_s > 1\} + B_u +
M_u,
\end{equation}
where $B_u$ is a predictable process of finite variation (the first
characteristic of $D_u$) and $M_u$ is a local martingale.
Due to \cite{JacodShiryaev}, IX Section~4a, and \eqref{eq:generator},
$B_u = \int_0^u \tilde\gamma(A_s,D_s)\,ds$ where $\tilde\gamma(x,t) =
\gamma(x,t) + \int s \1\{\|(y,s)\|\le1\} K(x,t;dy,ds)$.\break
Since $D_u$ has no diffusive part, $M_u$ is purely discontinuous and
equal to
$M_u = \sum_{s \le u}\Delta D_s\1\{\Delta D_s \le1\} - \int_0^u\int
w\1
\{\|(y,w)\|\le1\} K(A_s,D_s;dy,dw)\,ds$.\break  But then~\eqref{decomp} reads
$D_u = D_u^d + D_u^c$.

For fixed $\omega$, the paths of $D$, $D^c$ and $D^d$ are nondecreasing
and define Lebesgue--Stieltjes measures $dD$, $dD^c$ and $dD^d$ on
$[0,\infty)$.
Then for any bounded measurable $f$ and $g$, we have
%
\begin{equation}
\label{IntegralLeft} \int_{0}^{\infty}f(A_{u})g(D_{u})
\gamma(A_{u},D_{u})\,du =\int_{0}^{\infty}f(A_{u})g(D_{u})\,dD_{u}^{c}.
\end{equation}
The continuous measure $dD^{c}$ does not charge the countable set $\{
u\dvtx\break \Delta D_{u}\neq0\}$ of discontinuities of $D_u$ and coincides with
$dD$ on the complement $\{u\dvtx  \Delta D_{u}=0\}$.
Hence the right-hand side of \eqref{IntegralLeft} can be written as
%
\begin{equation}
\label{IntegralLeft2} \int_{0}^{\infty}f(A_{u})g(D_{u})
\1\{u\dvtx \Delta D_{u}=0\}\,dD_{u}.
\end{equation}
The following substitution formula holds for all right-continuous,
unbounded and strictly increasing $F \dvtx [0,\infty) \to[0,\infty)$, the
inverse $F^{-1}(t) = \inf\{u\dvtx\break F(u) > t\}$ and measurable $h\dvtx [0,\infty)
\to[0,\infty)$:
\[
\int_0^\infty h(u) \,dF_u = \int
_0^\infty h\bigl(F^{-1}(t)\bigr) \,dt.
\]
To see this, first show the statement for $h$ an indicator function of
an interval $(a,b] \subset[0,\infty)$ and then for a function taking
finitely many values. The statement for positive $h$ then follows by
approximation via a sequence of finitely valued functions from below,
and for general $h$ by a decomposition into positive and negative part.
Applying the substitution formula to \eqref{IntegralLeft2} with $F(u) =
D_u$, the right-hand side of \eqref{IntegralLeft} reduces to
\[
\int_{0}^{\infty}f(Y_{t})g(H_t)
\1\{t\dvtx \Delta D_{E_t}=0\}\,dt.
\]
Now note that $\Delta D_{E_t}=0$ is equivalent to $t\in\mathbf{C}$ and
implies $H_t=t$.
Hence, the above lines show that the left-hand side of \eqref
{IntegralLeft} equals
\[
\int_{0}^{\infty}f(Y_{t})g(t)\1\{t\in
\mathbf{C}\}\,dt.
\]
Take expectations and apply Tonelli's theorem to get
\[
\int_{\mathbb{R}^{d+1}}f(x)\gamma(x,t)g(t)u^{\chi,\tau}(x,t) \,dx \,dt =
\int_{0}^{\infty}\E^{\chi,\tau}
\bigl[f(Y_{t})\1\{t\in\mathbf{C}\}\bigr]g(t)\,dt.
\]
Since $g$ is an arbitrary nonnegative bounded measurable function,
this yields \eqref{eq:propC} for almost every $t$.
{ By our assumption that $D_u$ is transient, $U^{\chi,\tau}(\mathbb
{R}^d\times I)
< \infty$ for compact $I \subset[0,\infty)$, and then it can be seen
that the continuous function $u^{\chi,\tau}(x,t)$ must be bounded on
$\mathbb{R}^d\times I$.
Let $I$ contain $t$ and apply dominated convergence to see that}
the right-hand side of \eqref{eq:propC} is continuous in $t$.
We have already noted in the proof of Proposition~\ref{propNotInC} that
$\Delta(A,D)_{E_t} = (0,0)$ on $\{t \in\mathbf C\}$, which shows the
continuity of the left-hand side.
This shows the equality for \emph{all} $t \ge0$, and also that
$X_t - X_{t-} = 0 = Y_t - Y_{t-}$ on $\{t \in\mathbf C\}$.
\end{pf}

We can now characterize the joint law of $(X_{t-},Y_t,V_{t-},R_t)$:

\begin{theorem} \label{thJointLaw}
Fix $(\chi,\tau) \in\mathbb{R}^{d+1}$ and $t \ge\tau$.
If $\gamma$ does not vanish, then suppose that the mean occupation
measure $U^{\chi,\tau}(dx,dt)$ has a continuous Lebesgue density
$u^{\chi,\tau}(x,t)$, and if
$\gamma\equiv0$, let $u^{\chi,\tau}(x,t) \equiv0$.
Then
\begin{eqnarray*}
&&\E^{\chi,\tau}  \bigl[f(X_{t-},Y_{t},V_{t-},R_t)
\bigr]\\
&&\qquad=\int_{x\in\mathbb{R}^d}f(x,x,0,0)\gamma(x,t)u^{\chi,\tau}(x,t)
\,dx
\\
&&\qquad\quad{}+ \int_{x\in\mathbb{R}^d} \int_{s \in[\tau,t]}
U^{\chi,\tau
}(dx, ds)
\\
&&\quad\qquad{}\times \int_{y\in\mathbb{R}^d} \int_{w \in[t-s,\infty)} K (x,s;dy,dw) f
\bigl(x,x+y,t-s,w-(t-s)\bigr)
\end{eqnarray*}
for all bounded measurable $f$.
Moreover, $X_\tau= Y_\tau= \chi$ and $V_\tau= R_\tau= 0$, $\pr
^{\chi
,\tau}$-almost surely.
\end{theorem}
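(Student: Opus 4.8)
The plan is to split the expectation according to whether $t$ belongs to the continuously-traversed set $\mathbf C$, using $1 = \1\{t \notin \mathbf C\} + \1\{t \in \mathbf C\}$, and to treat each piece with one of the two preceding propositions. Writing a bounded measurable $f$ as $f = f^+ - f^-$ (the resulting integrals being finite since they are dominated by $\|f\|_\infty$), the contribution of $\{t \notin \mathbf C\}$ is exactly the quantity computed in Proposition~\ref{propNotInC}, reproducing verbatim the double integral against $U^{\chi,\tau}$ and $K$ in the statement. It then remains to show that the $\{t \in \mathbf C\}$ contribution equals the first term $\int_{\Rd} f(x,x,0,0)\gamma(x,t)u^{\chi,\tau}(x,t)\,dx$.

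First I would establish a pathwise identification on $\{t \in \mathbf C\}$. By definition of $\mathbf C$ we have $D_{E_t-} = t = D_{E_t}$, hence $G_{t-} = D_{E_t-} = t$ and $H_t = D_{E_t} = t$, so that $V_{t-} = t - G_{t-} = 0$ and $R_t = H_t - t = 0$. Moreover, as recorded in the proof of Proposition~\ref{propNotInC}, on $\{t \in \mathbf C\}$ the passage time $E_t$ is predictable and quasi-left-continuity of the canonical Feller process $(A_u,D_u)$ forces $\Delta(A,D)_{E_t} = (0,0)$ a.s.; consequently $X_{t-} = A_{E_t-} = A_{E_t} = Y_t$. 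Thus on $\{t \in \mathbf C\}$ the four-tuple collapses and $f(X_{t-},Y_t,V_{t-},R_t) = \tilde f(Y_t)$, where $\tilde f(x) := f(x,x,0,0)$ is bounded measurable on $\Rd$.

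With this reduction, the case $\gamma \not\equiv 0$ follows by applying Proposition~\ref{prop:C} to $\tilde f$, giving $\E^{\chi,\tau}[\tilde f(Y_t)\1\{t \in \mathbf C\}] = \int_{\Rd} \tilde f(x)\gamma(x,t)u^{\chi,\tau}(x,t)\,dx$, which is precisely the first term. For the degenerate case $\gamma \equiv 0$, Proposition~\ref{prop:C} is not assumed, so I would instead argue that the $\{t \in \mathbf C\}$ contribution vanishes outright: with no temporal drift the continuous part $D^c_u = \int_0^u \gamma(A_s,D_s)\,ds$ is identically zero, so $D_u$ advances only through jumps, and the classical potential argument for driftless subordinators shows that a fixed level $t$ is a.s.\ not a regeneration point, i.e.\ $\pr^{\chi,\tau}(t \in \mathbf M) = 0$ and hence $\pr^{\chi,\tau}(t \in \mathbf C) = 0$. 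This matches the convention $u^{\chi,\tau} \equiv 0$, so the first term is correctly zero and the formula reduces to the Proposition~\ref{propNotInC} integral alone. Summing the two contributions yields the claimed identity.

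For the boundary assertions at $t = \tau$, I would use $E_\tau = \inf\{u > 0 : D_u > \tau\} = 0$ (because $D_0 = \tau$ and $D$ is strictly increasing), whence $Y_\tau = A_0 = \chi$ and $X_\tau = (A_{0-})^+ = \chi$ under the convention $A_{0-} = A_0$; and since $\tau = \inf \mathbf M$, both $G_\tau$ and $H_\tau$ equal $\tau$, giving $V_\tau = R_\tau = 0$. I expect the main obstacle to be the pathwise collapse on $\{t \in \mathbf C\}$: one must verify that $X_{t-}$, $Y_t$ and the two age/residual-lifetime variables reduce \emph{simultaneously}, which rests squarely on the quasi-left-continuity identity $\Delta(A,D)_{E_t} = (0,0)$, together with the separate treatment of the driftless regime $\gamma \equiv 0$ that lies outside the hypotheses of Proposition~\ref{prop:C}.
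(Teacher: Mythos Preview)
Your proposal is correct and follows essentially the same route as the paper: decompose on $\{t\in\mathbf C\}$ versus $\{t\notin\mathbf C\}$, invoke Proposition~\ref{propNotInC} for the latter, and for the former use the collapse $V_{t-}=R_t=0$, $X_{t-}=Y_t$ together with Proposition~\ref{prop:C} applied to $\tilde f(x)=f(x,x,0,0)$. You in fact spell out more than the paper does (the quasi-left-continuity argument for $X_{t-}=Y_t$ on $\mathbf C$, and a separate justification for $\gamma\equiv 0$); the only soft spot is your appeal to the ``classical potential argument for driftless subordinators'' to conclude $\pr^{\chi,\tau}(t\in\mathbf C)=0$ when $\gamma\equiv 0$, since $D$ here is not a subordinator but the temporal component of a general Feller process---the paper itself is silent on this point, so your treatment is no less complete than the original.
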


\begin{pf}
On the set $\{t \in\mathbf C\}$, $V_{t-} = 0 = R_t$. The above formula
then follows from Propositions \ref{prop:C} and \ref{propNotInC}.
Assumption \eqref{assumption-increasing} and the right-continuity of
$D$ yields $V_\tau= R_\tau= 0$. The sample paths of $E$ are then seen
to be right-increasing at $\tau$ and $E_t > E_\tau= 0$ for $t > \tau$.
The right-continuity of $A$ together with $A_0 = \chi$, $\pr^{\chi
,\tau
}$-a.s. yields $X_\tau= Y_\tau= \chi$.
\end{pf}

\section{The Markov embedding} \label{sec:embedding}

In this section, we establish the Markov property of the processes
$(Y_t,R_t)$ and $(X_{t-},V_{t-})$.
Since $\{E_t \le u\} = \{D_u \ge t\}$, $\pr^{\chi,\tau}$-a.s. for
every $(\chi,\tau)\in\mathbb{R}^{d+1}$ \cite{limitCTRW}, equation
(3.2), we
see that
$E_t$ is an $\mathcal F$-optional time for every $t$. We introduce the
filtration $\mathcal H = \{\mathcal H_t\}_{t \in\R}$ where $\mathcal
H_t = \mathcal F_{E_t}$ and note that $(Y_t,R_t)$ is adapted to
$\mathcal H$.
Moreover, if $T$ is $\mathcal H$-optional, then
$E_T\dvtx \omega\mapsto E_{T(\omega)}(\omega)$
is $\mathcal F$-optional (see Lemma~\ref{lemmaOptional}).
We define the family of operators $\{Q_{s,t}\}_{s\le t}$ acting on the
space $B_b(\mathbb{R}^d\times[0,\infty))$ of real-valued bounded measurable
functions $f$ defined on $\mathbb{R}^d\times[0,\infty)$ as follows:
%
\begin{eqnarray}
\label{evolutionQ} 
Q_{s,t}f(y,0) &= &\E^{y,s}
\bigl[f(Y_t,R_t)\bigr],
\nonumber
\\
Q_{s,t}f(y,r) &= &\1\{r > t-s\} f\bigl(y,r-(t-s)\bigr)
\\
&&{}+ \1\{0 \le r \le t-s\} Q_{s+r,t}f(y,0).\nonumber 
\end{eqnarray}
%
The dynamics of $Q_{s,t}$ can be interpreted as follows:
If the process $(Y_t,R_t)$ starts at $(y,r)$, the position in space $y$
does not change while the remaining lifetime $R_t$ decreases linearly
to $0$. When $r = 0$, the process continues with the dynamics given by
$(Y_t,R_t)$ started at location $y$ at time $s+r$.
Note that $Q_{s,t}f(y,r)$ is measurable in $(s,t,y,r)$, for every
bounded measurable $f$, by the construction of the probability measures
$\pr^{\chi,\tau}$.
We can now state the strong Markov property of $(Y_t,R_t)$ with respect
to $\mathcal H$ and $Q_{s,t}$.

\begin{theorem} \label{th:RYembed}
Suppose that the operators $Q_{s,t}$ are given by \eqref{evolutionQ}. Then:

\begin{longlist}[(iii)]
\item[(i)]
The operators $Q_{s,t}$ satisfy the Chapman--Kolmogorov equations:
\[
Q_{q,s}Q_{s,t}f = Q_{q,t} f,\qquad q\le s\le t,
\]
and moreover, $Q_{s,t}\1 = \1$.
\item[(ii)]
Let $(\chi,\tau) \in\mathbb{R}^{d+1}$, $t \ge0$ and let $T$ be a
$\mathcal
H$-optional time. Then
\[
\E^{\chi,\tau}\bigl[f(Y_{T+t},R_{T+t})|\mathcal
H_T\bigr] = Q_{T,T+t}f(Y_T,R_T),
\qquad\pr^{\chi,\tau}\mbox{-almost surely}
\]
for every real-valued bounded measurable $f$.
\item[(iii)]
The process $t \mapsto(Y_t,R_t)$ is quasi-left-continuous with respect
to $\mathcal H$.
\end{longlist}
Hence, $(Y_t,R_t)$ is a Hunt process with respect to $\mathcal H$ and
transition operators~$Q_{s,t}$.
\end{theorem}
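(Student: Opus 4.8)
The plan is to deduce all three assertions from the strong Markov property of the canonical Feller process $(A_u,D_u)$ at the $\mathcal F$-optional time $E_T$, transported through the time change $\mathcal H_t=\mathcal F_{E_t}$. I would prove (ii) first, since (i) follows from it at deterministic times together with the deterministic bookkeeping in \eqref{evolutionQ}. Fix an $\mathcal H$-optional time $T$ and $t\ge 0$, and set $U:=E_T$, which is $\mathcal F$-optional by Lemma~\ref{lemmaOptional}; by the same time-change relation $\mathcal H_T\subseteq\mathcal F_U$, and $(A_U,D_U)=(Y_T,H_T)=(Y_T,T+R_T)$. Since $(Y_t,R_t)$ is $\mathcal H$-adapted and $Q_{s,t}f(y,r)$ is jointly measurable, the right-hand side $Q_{T,T+t}f(Y_T,R_T)$ is $\mathcal H_T$-measurable, so it suffices to show $\E^{\chi,\tau}[f(Y_{T+t},R_{T+t})\mid\mathcal F_U]=Q_{T,T+t}f(Y_T,R_T)$ and then condition down to $\mathcal H_T$.

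For the conditional expectation I would split on whether a renewal occurs in $(T,T+t]$. On $\{R_T>t\}$ the time $T+t$ lies in $[D_{U-},D_U)$, so $E_{T+t}=E_T=U$, giving $Y_{T+t}=Y_T$ and $R_{T+t}=H_T-(T+t)=R_T-t$; this is $\mathcal F_U$-measurable and equals the first branch $\1\{r>t-s\}f(y,r-(t-s))$ of \eqref{evolutionQ}. On $\{R_T\le t\}$ one has $T+t\ge H_T=D_U$, and since $D_u$ is strictly increasing the first passage factors through $U$ as $E_{T+t}=U+E_{T+t}\circ\theta_U$, whence $Y_{T+t}=Y_{T+t}\circ\theta_U$ and $R_{T+t}=R_{T+t}\circ\theta_U$ (with $T+t$ held fixed as an $\mathcal F_U$-measurable parameter). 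Applying the strong Markov property at $U$ then yields $\E^{\chi,\tau}[f(Y_{T+t},R_{T+t})\1\{R_T\le t\}\mid\mathcal F_U]=\1\{R_T\le t\}\,\E^{Y_T,H_T}[f(Y_{T+t},R_{T+t})]=\1\{R_T\le t\}\,Q_{T+R_T,T+t}f(Y_T,0)$, which is exactly the second branch of \eqref{evolutionQ} because $0\le R_T\le t$. Adding the two cases gives the claimed conditional expectation, and (ii) follows.

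For (i), $Q_{s,t}\1=\1$ is immediate from \eqref{evolutionQ}. The Chapman–Kolmogorov identity at $r=0$ is the Markov property at the deterministic time $s$: under $\pr^{y,q}$, $Q_{q,s}Q_{s,t}f(y,0)=\E^{y,q}[Q_{s,t}f(Y_s,R_s)]=\E^{y,q}[f(Y_t,R_t)]=Q_{q,t}f(y,0)$ by (ii) and the tower property. The case $r>0$ is a purely deterministic verification, obtained by substituting \eqref{evolutionQ} and splitting on the position of $r$ relative to $s-q$ and $t-q$; each sub-case collapses to the $r=0$ identity or to the linear decay of the lifetime, so I would record it as a brief case analysis. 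For (iii), let $T_n\uparrow T$ be $\mathcal H$-optional; then the $E_{T_n}$ are $\mathcal F$-optional by Lemma~\ref{lemmaOptional}, and since $t\mapsto E_t$ is continuous and non-decreasing, $E_{T_n}\uparrow E_T$. Quasi-left-continuity of $(A_u,D_u)$ \cite[Prop.~22.20]{Kallenberg} then gives $(A_{E_{T_n}},D_{E_{T_n}})\to(A_{E_T},D_{E_T})$ a.s.\ on $\{T<\infty\}$, i.e.\ $Y_{T_n}\to Y_T$ and $H_{T_n}\to H_T$; with $R_{T_n}=H_{T_n}-T_n\to R_T$ this proves $(Y_{T_n},R_{T_n})\to(Y_T,R_T)$.

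The step I expect to be the main obstacle is the restart identity on $\{R_T\le t\}$: one must justify rigorously that $E_{T+t}=U+E_{T+t}\circ\theta_U$ and that the OCTRWL functionals of the post-$U$ path carry the $\mathcal F_U$-measurable parameter $T+t$, and then invoke a random-parameter version of the strong Markov property. This relies on the joint measurability of $(\chi,\tau,a)\mapsto\E^{\chi,\tau}[f(Y_a,R_a)]$, available from the construction of the laws $\pr^{\chi,\tau}$ and the definition \eqref{evolutionQ}, so that $\E^{\chi,\tau}[F_{T+t}\circ\theta_U\mid\mathcal F_U]$ may be evaluated by freezing the parameter and inserting the state $(A_U,D_U)=(Y_T,H_T)$. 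The remaining steps are deterministic bookkeeping in \eqref{evolutionQ} or direct appeals to the regenerative structure and path regularity of $(A_u,D_u)$.
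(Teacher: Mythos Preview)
Your proposal is correct and follows essentially the same route as the paper: establish (ii) first by the strong Markov property of $(A,D)$ at the $\mathcal F$-optional time $E_T$ after splitting on whether $H_T$ exceeds the terminal time, derive (i) from (ii) via the tower property, and obtain (iii) by transporting quasi-left-continuity of $(A,D)$ along $E_{T_n}\uparrow E_T$. One simplification the paper makes for (i): rather than a separate deterministic case analysis at $r>0$, it observes that $(Y_q,R_q)=(y_0,r_0)$ a.s.\ under $\pr^{y_0,\,q+r_0}$ and runs the tower-property argument uniformly in $r_0\ge 0$; for (iii) the paper splits on $\{V_{T-}=0\}$ versus $\{V_{T-}>0\}$ via Lemma~\ref{decomposition} (since $E_T$ need not be $\mathcal F$-predictable), but your direct appeal to Kallenberg's formulation of quasi-left-continuity for arbitrary optional $\tau_n\uparrow\tau$ is valid and more economical.
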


\begin{pf} A proof is given in the \hyperref[app]{Appendix}.
\end{pf}
%

We define the filtration $\mathcal G = \{\mathcal G_t\}_{t \in\R}$ via
$\mathcal G_t = \mathcal F_{E_t-}$, the $\sigma$-field of all
$\mathcal
F$-events strictly before $E_t$. Evidently, the left-continuous process
$(X_{t-},V_{t-})$ is adapted to $\mathcal G$.
The main idea behind the Markov property of $(X_{t-},V_{t-})$ is that,
knowing the current state $(x,v)=(X_{t-},V_{t-})$ and the joint
distribution of the next space--time increment given by the kernel
$K(x,v;dy,dw)$ in \eqref{eq:generator}, one can calculate the
distribution of the next renewal time $H_t$ and the position $Y_t$ at
that time. Then the probability of events after the renewal point $H_t$
can be calculated starting at the point $(Y_t,H_t)$ in space--time.
We introduce the following notation:
Define the family of probability kernels $\{ K_v\}_{v\ge0}$ on
$\mathbb{R}^{d+1}$
%
\begin{eqnarray}
\label{eq:K_a} 
K_v(x,t;C) &=& \frac{K (x,t;C\cap (\mathbb{R}^d\times
[v,\infty)
)  )} {
K (x,t;\mathbb{R}^d\times[v,\infty)  )},\nonumber\\
\\[-20pt]
 \eqntext{v > 0,
(x,t) \in\mathbb{R}^{d+1}, C\subset\mathbb{R}^{d+1},}
\\
K_0(x,t;C) &=& \delta_{(0,0)}(C),\nonumber 
\end{eqnarray}
where $C$ is a Borel set. For $v>0$, $K_v(x,t;dy,dw)$ is the
conditional probability distribution of a space--time jump $(y,w)$ (a
jump-waiting time pair), given that a time-jump (a waiting time)
greater than or equal to $v$ occurs.
Should the denominator $ K(x,t;\mathbb{R}^d\times[v,\infty))$ equal
$0$, we
set $K_v(x,t;C) =0$. If $v=0$, then $K_0$ is the Dirac-measure
concentrated at $(0,0)\in\mathbb{R}^{d+1}$.
Since $v \mapsto K(x,t; C$ $\cap\mathbb\mathbb{R}^d\times[v,\infty
))$ is
decreasing, and hence measurable, it follows
that $v\mapsto K_v(x,t;C)$ is measurable for every $(x,t) \in\mathbb{R}^{d+1}$
and Borel $C\subset\mathbb{R}^{d+1}$.

We now define the family of operators $\{P_{s,t}\}_{s\le t}$ acting on
the space $B_b(\mathbb{R}^d\times[0,\infty))$ of real-valued bounded
measurable
functions defined on $\mathbb{R}^d\times[0,\infty)$:
%
\begin{eqnarray}
\label{evolutionP} 
&&P_{s,t}f(x,0) = \E^{x,s}
\bigl[f(X_{t-},V_{t-})\bigr],\nonumber
\\
\qquad &&P_{s,t}f(x,v)
\nonumber
\\[-8pt]
\\[-8pt]
\nonumber
&&\qquad= f(x,v+t-s) K_v\bigl(x,s-v;
\mathbb{R}^d\times[v+t-s,\infty)\bigr)
\\
&&\qquad\quad{}+\int_{y \in\mathbb{R}^d} \int_{w\in[v,v+t-s)}
P_{s+w-v,t}f(x+y,0) K_v(x,s-v; dy, dw).\nonumber 
\end{eqnarray}
%
The dynamics given by $P_{s,t}$ can be interpreted as follows. With
probability $K_v(x,s-v;\mathbb{R}^d\times(v+t-s,\infty))$, the
process remains
at $x$ and the age increases by $t-s$. This is the probability that the
size of a jump of $D$ whose base point is at $(x,s-v)$ exceeds $v+t-s$,
given that it exceeds $v$. The remaining probability mass for the jump
of $(A,D)$ is spread on the set $(y,w) \in\mathbb{R}^d\times
[v,v+t-s)$, and
the starting point is updated from $x$ to $x+y$ at the time $s-v+w$.

\begin{theorem} \label{th:AXembed}
Let $P_{s,t}$ be the operators defined by \eqref{evolutionP}. Then:
\begin{longlist}[(ii)]
\item[(i)]
The operators $(P_{s,t})$ satisfy the Chapman--Kolmogorov property:
\[
P_{q,s}P_{s,t}f = P_{q,t}f,
\qquad
 q \le s \le t,
\]
and moreover, $P_{s,t}\1 = \1$.
\item[(ii)]
The process $(X_{t-},V_{t-})$ satisfies the simple Markov property with
respect to $\mathcal G$ and $P_{s,t}$:
\[
\E^{\chi,\tau}\bigl[f(X_{t-},V_{t-}) | \mathcal
G_s\bigr] = P_{s,t}f(X_{s-},V_{s-}),\qquad
 \pr^{\chi,\tau}\mbox{-a.s.}
\]
for all $(\chi,\tau) \in\mathbb{R}^{d+1}$, $\tau\le s \le t$ and
real-valued
bounded measurable $f$.
\end{longlist}
\end{theorem}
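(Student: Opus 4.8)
The plan is to prove the substantive statement~(ii) first, by conditioning the driving process $(A_u,D_u)$ on the strict past $\mathcal G_s=\mathcal F_{E_s-}$ of the passage time $E_s$, and then to read off the Chapman--Kolmogorov relations~(i). Conditionally on $\mathcal G_s$ we know the arrival point of the particle's current sojourn: the position $x=X_{s-}=A_{E_s-}$ and the age $v=V_{s-}=s-D_{E_s-}$ together determine the base point $(x,s-v)=(A_{E_s-},D_{E_s-})$. The only quantity not measurable with respect to $\mathcal G_s$ is the pending space--time jump $(\Delta A_{E_s},\Delta D_{E_s})=(y,w)$, and the whole theorem rests on identifying its conditional law and then propagating the evolution past the resulting renewal.

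I would argue that, conditionally on $\mathcal G_s$ and on $\{V_{s-}=v\}$, this jump has law $K_v(x,s-v;dy,dw)$ from~\eqref{eq:K_a}, after which a one-step decomposition yields~(ii). If $w\ge v+(t-s)$ the current sojourn extends beyond $t$, so no renewal occurs on $(s,t]$ and the sample paths give $X_{t-}=x$, $V_{t-}=v+(t-s)$; this produces the first term of~\eqref{evolutionP}, with weight $K_v(x,s-v;\Rd\times[v+t-s,\infty))$. If instead $w\in[v,v+(t-s))$ a renewal occurs at $H_s=s-v+w<t$, the particle lands at $Y_s=x+y$, and the strong Markov property of $(A_u,D_u)$ at the $\mathcal F$-optional time $E_s$ (so that $H_s=D_{E_s}$ is $\mathcal F_{E_s}$-measurable) restarts the process from $(x+y,\,s-v+w)$; since $P_{s-v+w,t}f(x+y,0)=\E^{x+y,\,s-v+w}[f(X_{t-},V_{t-})]$ by definition, this is exactly the integrand of the second term of~\eqref{evolutionP}. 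The degenerate case $V_{s-}=0$ is the renewal instant itself, where the process regenerates and the first line of~\eqref{evolutionP} holds directly by the strong Markov property; it is also recovered from the $v>0$ formula on letting $K_0=\delta_{(0,0)}$.

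The main obstacle is the identification of the $\mathcal G_s$-conditional law of $(\Delta A_{E_s},\Delta D_{E_s})$ with $K_v(x,s-v;\cdot)$, and the difficulty is structural: $E_s$ is selected precisely as the epoch at which $D$ crosses the level $s$, so $\{D_{E_s}>s\ge D_{E_s-}\}$ already forces the temporal part $w=\Delta D_{E_s}$ to exceed $v=s-D_{E_s-}$, and we are therefore conditioning a jump on being the one that carries $D$ across $s$. I would handle this exactly as in Proposition~\ref{propNotInC}: test the claimed identity against an arbitrary $G\in\mathcal G_s$, rewrite $f(X_{t-},V_{t-})\1_G$ along the jump epochs of $(A_u,D_u)$, and apply the compensation formula~\cite[II.1.8]{JacodShiryaev} with the compensator~\eqref{compensator} equal to $K(A_{u-},D_{u-};dy,dw)\,du$. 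The predictable indicator singling out the sojourn straddling $s$ collapses the $du$-integral to the epoch $u=E_s$ and turns the raw kernel $K(x,s-v;\cdot)$ into its conditional normalization $K_v(x,s-v;\cdot)$, while the strong Markov property supplies the restart factor $P_{s-v+w,t}f$; one must separately dispose of the event on which the denominator in~\eqref{eq:K_a} vanishes, where $K_v$ is set to $0$.

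Part~(i) would then follow from~(ii). The normalization is immediate: taking $f\equiv\1$ in~\eqref{evolutionP} and using $K_v(x,s-v;\Rd\times[v,\infty))=1$ gives $P_{s,t}\1=\1$. For the semigroup relation I would first obtain the age-$0$ identity $P_{q,t}f(x,0)=P_{q,s}(P_{s,t}f)(x,0)$ from the definition $P_{q,s}g(x,0)=\E^{x,q}[g(X_{s-},V_{s-})]$, the tower property over $\mathcal G_s$, and~(ii) applied to the bounded measurable function $P_{s,t}f$, using that $(X_{q-},V_{q-})=(x,0)$ $\pr^{x,q}$-a.s.\ by Theorem~\ref{thJointLaw} and the convention $A_{0-}=A_0$. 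The identity for age $v>0$ then follows from the explicit recursion~\eqref{evolutionP}, which expresses both $P_{q,t}f(x,v)$ and $P_{q,s}(P_{s,t}f)(x,v)$ through age-$0$ values at later times; substituting one display into the other and splitting the $w$-integral at the threshold corresponding to a renewal at time $s$ verifies $P_{q,s}P_{s,t}f=P_{q,t}f$. I expect the compensation-formula identification of the previous paragraph to be the only genuinely delicate point, the remaining steps being sample-path bookkeeping and reuse of the strong Markov property of $(A_u,D_u)$.
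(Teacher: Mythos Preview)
Your plan matches the paper's: prove (ii) first via a one-step decomposition on whether the sojourn straddling $s$ extends past $t$, identify the $\mathcal G_s$-conditional law of $\Delta(A,D)_{E_s}$ as $K_{V_{s-}}(X_{s-},G_{s-};\cdot)$, restart via the strong Markov property of $(A_u,D_u)$ at the $\mathcal F$-optional time $E_s$, and then derive (i). The paper packages the conditional-jump identification as a separate lemma (Lemma~\ref{lem:jumps-ex}), treating the cases $V_{s-}=0$ and $V_{s-}>0$ through the predictable/totally inaccessible decomposition of $E_s$ (Lemma~\ref{decomposition}); your cases $H_s\ge t$ versus $H_s<t$ are exactly the paper's, since $H_s=s-V_{s-}+\Delta D_{E_s}$.

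There is one technical ingredient you have not named, and it is precisely the delicate point you flag. To test the conditional identity against an arbitrary $G\in\mathcal G_s=\mathcal F_{E_s-}$ and still apply the compensation formula, you must represent $\1_G=Z_{E_s}$ for some $\mathcal F$-\emph{predictable} process $Z$, so that the full integrand $W(\omega,u;y,w)$ is predictable in $u$. Your ``predictable indicator singling out the sojourn straddling $s$'' is the factor $\1\{D_{u-}<s\le D_{u-}+w\}$, which is indeed predictable; but $\1_G$ itself is not a priori of this form, and mimicking Proposition~\ref{propNotInC} verbatim does not supply it, since that proposition computes an unconditional expectation. The paper invokes \cite[Theorem~IV.67(b)]{DM79} for exactly this representation; without it the compensation-formula step does not close.

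For (i), your tower-property argument at age $0$ is a mild variant of the paper's route: the paper verifies $P_{r,s}P_{s,t}f(x,v)=P_{r,t}f(x,v)$ for general $v$ by direct expansion, using the identity~\eqref{eq:kdarts}, namely $K_{v+t}(x,z;C)\,K_v(x,z;\Rd\times[v+t,\infty))=K_v(x,z;C)$ for $C\subset\Rd\times[v+t,\infty)$, to merge the nested conditionings. Your reduction to the age-$0$ case via the recursion in~\eqref{evolutionP} leads to the same bookkeeping and will require the same identity when you split the $w$-integral at the threshold $w=v+s-q$.
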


\begin{pf} A proof is given in the \hyperref[app]{Appendix}.
\end{pf}

\begin{remark}
It would be interesting to investigate whether the \textit{moderate}
Markov property (e.g., see Chung and Glover \cite{Chung1979}) holds for
$(X_{t-},V_{t-})$.
An application of the compensation formula to the process
$(X_{t-},G_{t-})$ might yield a proof, but this would require the
semimartingale characteristics of $(X_{t-},G_{t-})$, which we have not
been able to calculate.
\end{remark}

\section{The time-homogeneous case}\label{sec4}

If the coefficients $b(x,t), \gamma(x,t), a(x,t)$ and $K(x,t;dy,dw)$ of
the generator $\mathcal A$ in \eqref{eq:generator} do not depend on $t
\in\R$, then we say that $(A_u,D_u)$ is a \emph{Markov additive
process}. This means that the future of $(A_u,D_u)$ only depends on the
current state of $A_u$; see, for example,~\cite{Cinlar1972}.

\begin{theorem}\label{TimeHomoTh}
If the space--time random walk limit process $(A_u,D_u)$ in \eqref
{MMMrwConv} is Markov additive, then the Markov processes
$(X_{t-},V_{t-})$ and $(Y_t,R_t)$ are time-homogeneous. Writing
$K_r(x;dy,ds):=K_r(x,t;dy,ds)$
and $\pr^x = \pr^{x,0}$, the transition semigroup $Q_{t-s}:=Q_{s,t}$ of
the Markov process $(Y_t,R_t)$ is given by
%
\begin{eqnarray}
\label{eq:semigroupQ} 
Q_t f(y,0) &= &\E^{y}
\bigl[f(Y_t,R_t)\bigr],
\nonumber
\\[-8pt]
\\[-8pt]
\nonumber
Q_t f(y,r) &=& \1\{0\le t < r\} f(y,r-t) + \1\{0 \le r \le t\}
Q_{t-r}f(y,0) 
\end{eqnarray}
%
and the transition semigroup $P_{t-s}:=P_{s,t}$ of the Markov process
$(X_{t-},V_{t-})$ is given by
%
\begin{eqnarray}
\label{eq:semigroupP} 
P_t f(x,0) &=& \E^x
\bigl[f(X_{t-},V_{t-})\bigr],\nonumber
\\
P_t f(x,v) &=& f(x,v+t) K_v\bigl(x;
\mathbb{R}^d\times[{v+t},\infty)\bigr)
\\
&&{}+\int_{\mathbb{R}^d\times[v,v+t)} P_{v+t-w}f(x+y,0) K_v(x;
dw, dy),\nonumber 
\end{eqnarray}
%
acting on the bounded measurable functions defined on $[0,\infty)
\times\mathbb{R}^d$.
\end{theorem}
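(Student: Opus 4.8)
The plan is to show that when the generator coefficients are time-independent, the evolution operators $P_{s,t}$ and $Q_{s,t}$ constructed in \eqref{evolutionP} and \eqref{evolutionQ} depend on $(s,t)$ only through the difference $t-s$, and that the resulting one-parameter families coincide with \eqref{eq:semigroupP} and \eqref{eq:semigroupQ}. The starting observation is the space-time homogeneity of the underlying process. When $\mathcal A$ does not depend on $t$, the Markov additive structure gives a shift invariance: under $\pr^{\chi,\tau}$ the law of $(A_u, D_u - \tau)_{u\ge 0}$ does not depend on $\tau$, so that $\pr^{\chi,\tau}$ is the image of $\pr^{\chi,0}=\pr^\chi$ under the deterministic time-shift $\tau$ in the $D$-coordinate. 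First I would make this precise at the level of the semigroup $T_u$ acting on $C_0(\spctim)$, verifying that $T_u$ commutes with temporal translations $\theta_a f(x,t)=f(x,t-a)$; this is immediate from \eqref{eq:generator} once $b,\gamma,a,K$ are $t$-free.

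Next I would transfer this invariance to the derived processes $E_t$, $X_t$, $Y_t$, $V_t$, $R_t$. Because $E_t=\inf\{u>0:D_u>t\}$ and the $D$-coordinate shifts by $\tau$, one checks directly that under $\pr^{\chi,\tau}$ the joint law of $(X_{(t+\tau)-},Y_{t+\tau},V_{(t+\tau)-},R_{t+\tau})$ coincides with the law of $(X_{t-},Y_t,V_{t-},R_t)$ under $\pr^{\chi,0}$, for every $t\ge 0$. Consequently the base cases $\E^{y,s}[f(Y_t,R_t)]$ and $\E^{x,s}[f(X_{t-},V_{t-})]$ in \eqref{evolutionQ} and \eqref{evolutionP} depend on $(s,t)$ only through $t-s$, which justifies the definitions $Q_{t-s}f(y,0)=\E^y[f(Y_{t-s},R_{t-s})]$ and $P_{t-s}f(x,0)=\E^x[f(X_{(t-s)-},V_{(t-s)-})]$. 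It remains to propagate the homogeneity through the off-diagonal ($r>0$, resp.\ $v>0$) branches of the recursions. For $Q$, the branch $\1\{r>t-s\}f(y,r-(t-s))+\1\{0\le r\le t-s\}Q_{s+r,t}f(y,0)$ visibly depends only on $t-s$ and on $Q_{s+r,t}$, and by induction (or by the Chapman--Kolmogorov property already established in Theorem \ref{th:RYembed}(i)) the latter equals $Q_{t-s-r}f(y,0)$, giving \eqref{eq:semigroupQ}. For $P$, one additionally needs the kernel $K_v(x,s-v;\cdot)$ appearing in \eqref{evolutionP} to lose its $t$-dependence: here the time-homogeneity of $K$ lets us write $K_v(x,s-v;dy,dw)=K_v(x;dy,dw)$ in the notation of the theorem, and then the same substitution $s-v+w$ turns the remaining $P_{s+w-v,t}$ factor into $P_{t-(s+w-v)}=P_{v+t-s-w}$, yielding \eqref{eq:semigroupP} after relabeling $s\to 0$, $t\to t-s$.

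Finally, the semigroup (rather than merely two-parameter evolution) property $Q_sQ_t=Q_{s+t}$ and $P_sP_t=P_{s+t}$ follows by specializing the Chapman--Kolmogorov equations $Q_{q,s}Q_{s,t}=Q_{q,t}$ and $P_{q,s}P_{s,t}=P_{q,t}$ from Theorems \ref{th:RYembed}(i) and \ref{th:AXembed}(i) to $q=0$, using the reparametrization just established; this confirms that $(Y_t,R_t)$ and $(X_{t-},V_{t-})$ are genuine time-homogeneous Markov processes.

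The main obstacle I anticipate is the careful transfer of the shift invariance from the abstract Feller law $\pr^{\chi,\tau}$ to all the pathwise-defined functionals $X,Y,V,R$ simultaneously, since these are built from $E_t$, left limits, and the regenerative set $\mathbf M$, all of which interact with the $D$-coordinate. One must verify that the deterministic temporal shift intertwines $E_t$ correctly (so that $E_t$ under $\pr^\chi$ matches $E_{t+\tau}$ under $\pr^{\chi,\tau}$) and that the exceptional modifications defining $X_t=(A_{E_t-})^+$ are preserved under the shift; granting the canonical Feller setup of Section \ref{sec:spctim}, this is a bookkeeping argument rather than a deep difficulty, but it is where the proof must be written with care.
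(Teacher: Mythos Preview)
Your proposal is correct and follows essentially the same route as the paper: establish that the generator (hence the semigroup $T_u$ and the potential $U$) commutes with temporal shifts, deduce that the joint law of $(X_{t-},Y_t,V_{t-},R_t)$ under $\pr^{\chi,\tau}$ equals that of $(X_{(t-\tau)-},Y_{t-\tau},V_{(t-\tau)-},R_{t-\tau})$ under $\pr^{\chi,0}$, and then read off that $P_{s,t}$ and $Q_{s,t}$ depend only on $t-s$. The only difference is that the paper bypasses your anticipated ``main obstacle'' (the careful pathwise transfer through $E_t$, left limits, and $\mathbf M$) by invoking the explicit formula of Theorem~\ref{thJointLaw} and the identity $U^{\chi,\tau}(dx,dt)=U^{\chi}(dx,dt-\tau)$, which immediately gives the required shift invariance of the joint law without any sample-path bookkeeping.
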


\begin{pf}
Since $(A_u,D_u)$ is Markov additive, we have $\vartheta_s \mathcal Af
= \mathcal A \vartheta_s f$ for all $s \in\R$, where the shift
operator $\vartheta_s f(x,t) = f(x,t+s)$. It follows that the
resolvents $(\lambda- \mathcal A)^{-1}$, the semigroup $T_u$ and the
kernel $Uf(\chi,\tau) = U^{\chi,\tau}(f)$ commute with $\vartheta
_s$. Then
$\E^{\chi,\tau}[f(A_u,D_u)] = \E^{\chi,0}[f(A_u,\tau+D_u)]$ for
all $u$
and measurable $f$, and hence it suffices to work with the laws $\pr
^{\chi,0}$.
Now in Theorem~\ref{thJointLaw},writing $U^{\chi,\tau}(dx,dt) =
U^{\chi
}(dx,dt-\tau)$, we have
\[
\E^{\chi,\tau}\bigl[f(X_{t-},Y_t,V_{t-},R_t)
\bigr] = \E^{\chi,0}\bigl[f(X_{(t-\tau)-},Y_{t-\tau},V_{(t-\tau)-},R_{t-\tau})\bigr],
\]
where $\tau= 0$ without loss of generality. It follows that \eqref
{eq:semigroupQ} and \eqref{eq:semigroupP} are semigroups acting on the
bounded measurable functions defined on $[0,\infty) \times\mathbb{R}^d$,
compare \cite{Jacod1973}, equations (19) and (31).
\end{pf}

\begin{remark}\label{RemSec3}
Under the assumptions of Theorem~\ref{thJointLaw}, a simple
substitution yields the formulation of $P_t$ and $Q_t$ in terms of
transition probabilities: For $P_t$, we find
\begin{eqnarray}
\label{Ptxrdzdw}
&&P_{t}(x_0,0;dx,dv)\nonumber\\
&&\qquad=\gamma(x,t)u^{x_0}(x,t)
\,dx \delta_0(dv)
\nonumber\\
&&\qquad\quad{}+K \bigl(x_0;\mathbb{R}^d\times[v,\infty)\bigr)
U^{x_0}(dx, t-dv)\1\{0\leq v\leq t\},
\nonumber
\\[-8pt]
\\[-8pt]
\nonumber
&&P_{t}(x_0,v_0;dx,dv)\\
&&\qquad =
\delta_{x_0}(dx) \delta_{v_0+t}(dv) K_{v_0}
\bigl(x_0;\mathbb{R}^d\times[v_0+t,\infty
)\bigr)\nonumber
\\
&&\qquad\quad{}+ \int_{y \in\mathbb{R}^d} \int_{w\in[v_0,v_0+t)}
P_{v_0+t-w}(x_0+y,0;dx,dv) K_{v_0}(x_0;
dy, dw),\nonumber 
\end{eqnarray}
and for $Q_t$ we have\vspace*{2pt}
%
\begin{eqnarray}
&&Q_{t} (y_0,0;dy,dr)\nonumber\\
&&\qquad=\gamma(y,t)
u^{y_0}(y,t) \,dy \delta_0(dr)
\nonumber\\
&&\qquad\quad{}+ \int_{x\in\mathbb{R}^d} \int_{w\in[0,t]}
U^{y_0}(dx,dw) K(x;dy-x,dr+t-w),
\\
&&Q_{t} (y_0,r_0;dy,dr) \nonumber\\
\nonumber&&\qquad= \1\{0<t <
r_0\} \delta_{y_0}(dy)\delta_{r_0-t}(dr)+ \1\{0 \le r_0 \le t\} Q_{t-r_0}(y_0,0;dy,dr).\nonumber
\end{eqnarray}
%
\end{remark}

\section{Finite-dimensional distributions}\label{sec:fdd}

In this section, we provide two examples to illustrate the explicit
computation of finite dimensional distributions for the CTRWL process
$X_t$ and the OCTRWL process $Y_t$.

\begin{example}[(The inverse stable subordinator)]
A very simple CTRW model takes deterministic jumps $J^c_n=c^{-1}$ and
waiting times $W^c_k$ in the domain of attraction of a standard $\beta
$-stable subordinator $\bar D_u$ such that $\E[e^{-s\bar
D_u}]=e^{-us^\beta}$. Setting $(A_0,D_0) = (\chi,\tau)$, \eqref
{MMMrwConv} holds with $(A_u,D_u)=(\chi+u,\tau+\bar D_u)$, where
$\bar
D_u$ is a $\beta$-stable subordinator. Here, the CTRWL and the OCTRWL
coincide, since $A_u$ has no jumps. If $(\chi,\tau)=(0,0)$, then in
\eqref{defCTRWL} we have $X_t=Y_t=E_t$, the inverse $\beta$-stable
subordinator. Now we will compute the joint distributions of this first
passage time process. The joint Laplace transform of these
finite-dimensional distributions was computed by Bingham \cite{Bingham}
but to our knowledge, the distributions themselves have not been
reported in the literature.

The space--time limit $(A_u,D_u)$ is a canonical Feller process on
$\mathbb{R}^{d+1}$ with generator $\mathcal A$ given by \eqref{eq:generator}
with $d=1$, $b_1\equiv1$, $\gamma\equiv0$, $a_{11}\equiv0$, and jump
kernel $K(x,t;dy,dw)=\delta_0(dy)\Phi(w)\,dw$ by \cite{FCbook}, Proposition~3.10, where the L\'evy measure $\Phi(w)\,dw= \beta w^{-\beta
-1}\,dw/\Gamma(1-\beta)$. The stable L\'evy process $\bar D_u$ has a
smooth density $g(t,u)$ so that $\pr^{0,0}(\bar D_u \in dt) = g(t,u)\,dt$
for every $u>0$ by \cite{Jurek1993}, Theorem~4.10.2.
The underlying process $(A_u,D_u)$ is Markov additive, hence
$(X_{t-},V_{t-})$ and $(Y_t,R_t)$ are time-homogeneous Markov
processes. In \cite{StrakaHenry}, Lemma~4.2, it was shown that
$(X_t,V_t)$, has no fixed discontinuities, hence $(X_{t-},V_{t-})$ has
the same law as $(X_t,V_t)$.
One checks that
the $0$-potential of $(A_u,D_u)$ is absolutely continuous with
density\vspace*{2pt}
%
\begin{equation}
\label{uxtex1} u^{\chi,\tau}(x,t)=g(t-\tau,x-\chi)\1\{t>\tau,x>\chi\}.
\end{equation}
Then it follows from (\ref{Ptxrdzdw}) that the transition semigroup of
$(X_{t-},V_{t-})$ is given by\vspace*{2pt}
\begin{eqnarray}
\label{ptchi0xv2} 
&&P_t(x_0,0;dx,dv)\nonumber\\
&&\qquad=g(t-v,x-x_0)
\Phi(v,\infty)\, dx \,dv,
\\
&&P_t(x_0,v_0;dx,dv)\nonumber
\\
&&\qquad= \delta_{x_0}(dx)
\delta_{v_0+t}(dv) \biggl(\frac{v_0+t}{v_0} \biggr)^{-\beta} \1
\{v_0>0\}
\nonumber\\
\nonumber &&\qquad\quad{}+ \biggl( \frac{v_0}{v} \biggr)^\beta \int_{s=v_0}^{v_0+t-v} g
\bigl((t-v)-(s-v_0),x-x_0\bigr)\\
 &&\hspace*{84pt}\qquad\quad{}\times\frac{\beta s^{-1-\beta}\,ds}{\Gamma(1-\beta)}
 \1
\{x>x_0,0<v<t\} \,dx \,dv.\nonumber 
\end{eqnarray}
Hence, for $0<t_1<t_2$, the joint distribution of
$(E_{t_1},V_{t_1},E_{t_2},V_{t_2})$ is
\begin{eqnarray*}
&&\pr^{0,0}(E_{t_1}\in dx, V_{t_1} \in
dv, E_{t_2} \in dy, V_{t_2} \in dw)
\\[1pt]
&&\qquad= P_{t_2-t_1}(x,v;dy,dw)P_{t_1}(0,0;dx,dv)
\\[1pt]
&&\qquad=g(t_1-v,x)\Phi(v,\infty) \1\{x>0,0<v<t_1\}\,dx \,dv
\\[1pt]
&&\qquad\quad{}\times \biggl[\delta_x(dy)\delta_{v+t_2-t_1}(dw) \biggl(
\frac
{v+t_2-t_1}{v} \biggr)^{-\beta}
\\[1pt]
&&\hspace*{16pt}\qquad\quad{}+\int_{s=v}^{v+t_2-t_1-w} g\bigl((t_2-t_1-w)-(s-v),y-x
\bigr)
\\[1pt]
&&\hspace*{80pt}\qquad\quad{}\times\frac{\beta s^{-\beta-1} \,ds}{\Gamma(1-\beta
)} \biggl(\frac{v}w
\biggr)^\beta dy \,dw\, \1\{y>x,0<w<t_2-t_1\}
\biggr] 
\end{eqnarray*}
since $E_0=V_0=0$ for the physical starting point $(0,0)$. Integrating
out the backward renewal times $V_{t_1}$ and $V_{t_2}$, it follows that
the joint distribution of $(E_{t_1},E_{t_2})$ is
%
\begin{eqnarray}
\label{EtFDD} 
&&\pr(E_{t_1}\in dx, E_{t_2} \in
dy)
\nonumber\\[1pt]
&&\qquad= \1\{x>0\}\delta_x(dy) \int_{v=0}^{t_1}g(t_1-v,x)
\frac
{(v+t_2-t_1)^{-\beta}}{\Gamma(1-\beta)} \,dv
\\
&&\qquad\quad{}+\int_{v=0}^{t_1}\int_{w=0}^{t_2-t_1}
\int_{s=v}^{v+t_2-t_1-w} g\bigl((t_2-t_1-w)-(s-v),y-x
\bigr)\,dy\1\{y>x\}\nonumber
\nonumber\\
&&\hspace*{118pt}\qquad\quad{}\times\frac{\beta s^{-\beta-1} \,ds}{\Gamma(1-\beta
)} \biggl(\frac{v}w
\biggr)^\beta dw \,dv.\nonumber 
\end{eqnarray}
\end{example}

\begin{remark}
The joint distribution of $(E_{t_1},E_{t_2})$ can also be computed from
the OCTRW embedding,
but the computation appears to be simpler using the CTRWL embedding.
\end{remark}

\begin{remark}
Baule and Friedrich \cite{Baule2007} compute the Laplace transform of
the joint distribution function $H(x,y,s,t)$ of $x=E_s$ and $y=E_t$ and
show that
\[
(\partial_x+\partial_y)H(x,y,s,t)=-(
\partial_{s}+\partial _{t})^\beta H(x,y,s,t)
\]
on $0<s<t$ and $0<x<y$. Equation \eqref{EtFDD} provides an explicit
solution to this governing equation, which solves an open problem in
\cite{Baule2007}.
The finite dimensional laws of any uncoupled CTRW limit can easily be
calculated from the finite dimensional laws of $E_t$, given the law of
the process $A_u$. This follows from a simple conditioning argument;
see, for example, \cite{limitCTRW}.
\end{remark}

\begin{example}\label{ExK}
Kotulski \cite{Kotulski95} considered a CTRW with jumps equal to the
waiting times $J^c_n=W^c_k$, in the domain of attraction of a standard
$\beta$-stable subordinator $\bar D_u$ such that $\E[e^{-s\bar
D_u}]=e^{-us^\beta}$.
Equation \eqref{MMMrwConv} holds with $(A_u,D_u)=(A_0+\bar
D_u,D_0+\bar
D_u)$. The space--time limit $(A_u,D_u)$ is a canonical Feller process
on $\mathbb{R}^{d+1}$ with generator $\mathcal A$ given by \eqref
{eq:generator}
with $d=1$, $\gamma\equiv0$ and $K(x,t;dy,dw)=\delta_w(dy)\Phi(dw)$,
where $\Phi(dw)=\phi(w)\,dw= \beta w^{-\beta-1}\,dw/\Gamma(1-\beta)$. The
stable L\'evy process $\bar D_u$ has a smooth density $g(t,u)$ so that
$\pr^{0,0}(\bar D_u \in dt) = g(t,u)\,dt$ for every $u>0$. Since the
Markov process $(A_u,D_u)$ is Markov additive, we need only compute the
potential for $\tau=0$:
\begin{equation}
\label{uxtex2a} U^{\chi,0}(dx,dt)= \delta_{\chi+t}(dx) \int
_{u=0}^\infty g(t,u)\,du \,dt.
\end{equation}
Next, one sees that
\begin{equation}
\label{Intex2} \int_{u=0}^\infty g(t,u) \,du=
\frac{t^{\beta-1}}{\Gamma(\beta)}
\end{equation}
by taking Laplace transforms on both sides (also see \cite{FCbook}, Example~2.9).
The $0$-potential hence equals
%
\begin{equation}
\label{uxtex2} U^{\chi,0}(dx,dt)= \delta_{\chi+t}(dx)
\frac{t^{\beta-1}}{\Gamma
(\beta
)} \,dt.
\end{equation}
With $\Phi([v,\infty)) =v^{-\beta}/\Gamma(1-\beta)$, \eqref
{Ptxrdzdw} reads
\begin{eqnarray*}
P_{t}(x_0,0;dx,dv)&=&\frac{v^{-\beta}}{\Gamma(1-\beta)}
\frac
{(t-v)^{\beta-1}}{\Gamma(\beta)} \delta_{x_0+t-v}(dx) \,dv\1\{0< v< t\},
\\
\label{Ptxrdzdwex2}
P_t(x_0,v_0;dx,dv)&=& \delta_{x_0}(dx)
\delta_{v_0+t}(dv) \biggl(\frac
{v_0+t}{v_0} \biggr)^{-\beta}
\\
&&{}+ \int_{s=v_0}^{v_0+t} \biggl(\frac{v}{v_0}
\biggr)^{-\beta} \delta _{x_0+v_0+t-v}(dx) \frac{(v_0+t-s-v)^{\beta-1}}{\Gamma(\beta)}
\\
&&\hspace*{39pt}{}\times \1\{0< v<v_0+ t-s\} \frac{\beta s^{-\beta-1}}
{\Gamma(1-\beta)} \,ds \,dv. 
\end{eqnarray*}
%
Note that the above formulae extend Example~5.5 in \cite{coupleCTRW},
which calculates the law of $X_{t-}$.
The joint distribution of $\{(X_{t_i-},V_{t_i-})\dvtx 0\leq i\leq n\}$ can
now be computed by a simple conditioning argument.
Similarly, the semigroup for $(Y_t,R_t)$ reads
\begin{eqnarray*}
&&Q_t(y_0,r_0;dy,dr)
\\
&&\qquad=\delta_{y_0}(dy)\delta_{r_0-t}(dr)\1\{r_0\geq t
\} +Q_{t-r_0}(dy-y_0,dr)\1\{0<r_0< t\}
\\
&&\qquad=\delta_{y_0}(dy)\delta_{r_0-t}(dr)\1\{r_0\geq t
\}
\\
&&\qquad\quad{}+\1\{0<r_0< t\}\delta_{r+t-r_0+y_0}(dy)\\
&&\qquad\quad{}\times\int_{w=0}^{t-r_0}
\frac
{w^{\beta
-1}}{\Gamma(\beta)} \frac{\beta(t-r_0+r-w)^{-\beta-1}}{\Gamma(1-\beta)}\,dw \,dr. 
\end{eqnarray*}
The joint distributions of $X_{t-}$, $Y_{t}$ lead directly to the joint
distribution of CTRWL, OCTRWL, respectively, for a wide variety of
coupled models; see~\cite{OCTRW2}.
\end{example}

%


\begin{appendix}\label{app}
\section*{Appendix: Proofs}

\begin{lemmas} \label{lemmaOptional}
Let $T$ be $\mathcal H$-optional.
Then $E_T\dvtx \omega\mapsto E_{T(\omega)}(\omega)$
is $\mathcal F$-optional.
\end{lemmas}
\begin{pf}
We first assume that $T$ is single valued. That is, fix $t>0$ and $U
\in\mathcal H_t$, and let
$T(\omega) = t \cdot\1 \{ \omega\in U \} + \infty\cdot\1 \{\omega
\notin U \}$.
It is easy to check that $T$ is indeed $\mathcal H$-optional.
Now $\{E_T \le u\} = \{E_t \le u\} \cap U$, and the right-hand side
lies in $\mathcal F_u$, which follows from $U \in\mathcal H_t =
\mathcal F_{E(t)}$ and the definition of the stopped $\sigma$-algebra
$\mathcal F_{E(t)}$. Now consider an $\mathcal H$-optional time $T$
with countably many values $t_n$, so that $\Omega= \bigcup_{n \in
\mathbb N} \{\omega\dvtx  T(\omega) = t_n\}$.
Then due to the a.s. nondecreasing sample paths of $E$, we have
$E(\inf T_n) = \inf E(T_n)$, and an application of \cite{Kallenberg}, Lemma~6.3/4, together with the right-continuity of the
filtrations $\mathcal F$ and $\mathcal H$ shows that $E_T$ is $\mathcal
H$-optional.
\end{pf}

Stopping times allows for a decomposition into a predictable and
totally inaccessible part \cite{Kallenberg}.
The following lemma gives an interpretation for stopping times of the
form $E_T$.

\begin{lemmas} \label{decomposition}
Let $T > 0$ be an $\mathcal H$-predictable stopping time. Then the
$\mathcal F$-stopping time $E_T$ is predictable on the set
$\{\omega\dvtx  E_{T-\eps}(\omega) < E_T(\omega)\ \forall\eps> 0\} = \{
V_{T-} = 0\}$ and totally inaccessible on the complement
$\{\omega\dvtx  \exists\eps> 0, E_{T-\eps}(\omega) = E_T(\omega)\} = \{
V_{T-} > 0\}$.
Moreover, $\Delta(A,D)_{E_T}=(0,0)$ on $\{V_{T-}=0\}$ and\break  $\Delta
D_{E_T} > 0$ on $\{V_{T-}>0\}$, $\pr^{\chi,\tau}$-a.s.
\end{lemmas}
\begin{pf}
Let $T_n$ be an announcing sequence (\cite{Kallenberg}, page 410), for $T$,
that is $T_n$ are $\mathcal H$-stopping times, $T_n < T$, $T_n \uparrow
T$ a.s.
Then due to the a.s. continuity of sample paths of $E$, the sequence
$E_{T_n}$ announces $E_T$ on the set $\{V_{T-} = 0\}$, that is $E_T$ is
predictable on this set.
As a canonical Feller process, $(A,D)$ is quasi-left-continuous, and
all its jump times are totally inaccessible (\cite{Kallenberg},
Proposition~22.20), hence $\Delta(A,D)_{E_T}=(0,0)$, $\pr^{\chi
,\tau}$-a.s. on $\{V_{T-}=0\}$.
On the complementary set $\{V_{T-}> 0\}$, we have
$0 < H_T - G_{T-} = \Delta D_{E_T}$, and hence the process $D$ jumps at~$E_T$.
\end{pf}

\begin{pf*}{Proof of Theorem~\ref{th:RYembed}}
We first prove (ii).
Consider the set of $\omega$ such that $H_T(\omega) > t$.
In this case, $\mathbf M_\omega\cap(T,{t}) = \varnothing$, and hence
$E_T = E_{t}$, so $(Y_{t},H_{t}) = (Y_T,H_T)$, which implies that
%
\setcounter{equation}{0}
\begin{eqnarray}
\label{eq:Ds>t} 
\E^{\chi,\tau}\bigl[f(Y_{t},R_{t})
\1_{\{H_T>t\}}|\mathcal H_T\bigr] &=& f(Y_T,H_T-t)
\1_{\{H_T>t\}}
\nonumber
\\[-8pt]
\\[-8pt]
\nonumber
&=& f\bigl(Y_T,R_T -(t-T)\bigr) \1_{\{H_T>t\}}.
\end{eqnarray}
This corresponds to the first case in \eqref{evolutionQ}.
Turning to the second case, \mbox{$H_T(\omega) \le{t}$}, consider the shift
operators $\theta_t$ acting on $\Omega$, which are defined as usually
by $(\theta_t\omega)(u) = \omega(t+u)$, or equivalently
%
\begin{equation}
\label{eq:vartheta} (A,D)_u(\theta_t\omega) =
(A,D)_{t+u}(\omega),
\end{equation}
since $(A,D)$ is canonical for $\Omega$.
Then from the definition of the inverse process $E$, we find
%
\begin{eqnarray}
\label{eq:Eshift} 
E_t(\theta_{E_T}\omega) &=&
\inf\bigl\{u\ge0\dvtx D_u(\theta_{E_T}\omega) > t\bigr\} =
\inf\bigl\{u\ge0\dvtx D_{u + E_T}(\omega) > t \bigr\}\nonumber
\\
&=& \inf\bigl\{u\dvtx u-E_T(\omega) \ge0, D_u(\omega) >
t \bigr\} - E_T(\omega)
\\
&=&E_t(\omega) - E_T(\omega),\nonumber 
\end{eqnarray}
where $\theta_{E_T}\omega= \theta_{u}\omega$ if $E_T(\omega) = u$.
Now observe that $(A,D)_{E_t}$ is the point in $\mathbb{R}^{d+1}$
where the
process $(A,D)$ enters the set
$\mathbb{R}^d\times({t},\infty)$.
This point will be the same for the space--time path started at the
earlier time $E_T$, that is,
%
\begin{equation}
\label{eq:vartheta-inv} (A,D)_{E_t} \circ\theta_{E_T} =
(A,D)_{E_t}.
\end{equation}
In fact, using \eqref{eq:vartheta} and \eqref{eq:Eshift} we find
\begin{eqnarray*}
(A,D)_{E_t}(\theta_{E_T}\omega) &=& (A,D)_{E_t(\theta_{E_T}\omega)}(
\theta_{E_T}\omega) = (A,D)_{E_T(\omega) + E_t(\theta_{E_T}\omega)}(\omega)
\\
&=& (A,D)_{E_t(\omega)}(\omega) = (A,D)_{E_t}(\omega)
\end{eqnarray*}
for all $\omega\in\Omega$. Hence, we have shown that
\[
H_{t}(\theta_{E_T}\omega) = H_{t}(\omega),
\qquad
Y_{t}(\theta_{E_T}\omega) = Y_{t}(\omega)
\]
on the set $\{H_T\le{t}\}$. This yields
%
\begin{eqnarray}
\label{eq:Ds<t} 
\E^{\chi,\tau}\bigl[f(Y_{t},R_{t})
\1_{\{H_T \le t\}}|\mathcal H_T\bigr] &=&\E^{\chi,\tau}
\bigl[f(Y_{t},R_{t})\circ\theta_{E_T}
\1_{\{H_T \le t\}
}|\mathcal H_T\bigr]
\nonumber\\
&=&\E^{\chi,\tau}\bigl[f(Y_{t},R_{t}) \circ
\theta_{E_T}|\mathcal F_{E_T}\bigr] \1 _{\{H_T \le t\}}
\nonumber
\\[-8pt]
\\[-8pt]
\nonumber
&=&\E^{(A,D)_{E_T}}\bigl[f(Y_{t},R_{t})\bigr]
\1_{\{H_T \le t\}}
\\
&=&\E^{Y_T,H_T}\bigl[f(Y_{t},R_{t})\bigr]
\1_{\{H_T \le t\}}\nonumber 
\end{eqnarray}
$\pr^{\chi,\tau}$-almost surely, using the strong Markov property of
$(A,D)$ at the stopping time $E_T$.
Then (ii) follows by adding equations \eqref{eq:Ds>t} and \eqref{eq:Ds<t}.

As for (i), let $(y_0,r_0) \in\mathbb{R}^d\times[0,\infty)$.
Then $\pr^{y_0,q+r_0}(Y_r=y_0,R_q = r_0)=1$, and hence by nested
conditional expectations and the above calculations we have
\begin{eqnarray*}
Q_{q,t}f(y_0,r_0) &=& \E^{y_0,q+r_0}
\bigl[Q_{q,t}f(Y_q,R_q)\bigr]
\\
&=& \E^{y_0,q+r_0}\bigl[\E^{y_0,q+r_0}\bigl[f(Y_t,R_t)|
\mathcal H_q\bigr]\bigr]
\\
&=& \E^{y_0,q+r_0}\bigl[\E^{y_0,q+r_0}\bigl[\E^{y_0,q+r_0}
\bigl[f(Y_t,R_t)|\mathcal H_s\bigr]|
\mathcal H_q\bigr]\bigr]
\\
&=&\E^{y_0,q+r_0}\bigl[\E^{y_0,q+r_0}\bigl[Q_{s,t}f(Y_s,R_s)|
\mathcal H_q\bigr]\bigr]
\\
&=& \E^{y_0,q+r_0}\bigl[Q_{q,s}Q_{s,t}f(Y_q,R_q)
\bigr] = Q_{q,s}Q_{s,t}f(y_0,r_0).
\end{eqnarray*}
We turn to the remaining case (iii).
By definition of $R_t$, it suffices to show that if $T$ is a $\mathcal
H$-predictable time, then $(Y,H)_{T-} = (Y,H)_T$, $\pr^{\chi,\tau
}$-a.s. for every $(\chi,\tau) \in\mathbb{R}^{d+1}$.
Hence let $T_n < T$, $T_n \uparrow T$ be a sequence of $\mathcal
H$-optional times announcing~$T$.
As in Lemma~\ref{decomposition}, we check the two cases in which the
$\mathcal F$-stopping time $E_T$ is predictable or totally inaccessible.

On the set
$\{\omega\dvtx  E_{T-\eps}(\omega) < E_T(\omega), \forall\eps> 0\}$, the
process $E$ is left-increasing at $T$, continuous, and $E_{T_n}
\uparrow E_T$, $E_{T_n} < E_T$ if
$T_n \uparrow T$, $T_n < T$. Moreover, $\Delta(A,D)_{E_T} = (0,0)$
a.s. (Lemma~\ref{decomposition}). Hence,
\[
(H,Y)_{T-} = (A,D)_{E_{T-}} = (A,D)_{E_T-} =
(A,D)_{E_T} = (H,Y)_{T}.
\]

On the set $\{\omega\dvtx  \exists\eps> 0 \dvtx E_{T-\eps}(\omega) =
E_T(\omega
)\}$, $E$ is left-constant at $T$. Hence, $E_{T_n} = E_T$ for large
$n$, and
\[
(H,Y)_{T-} = \lim(H,Y)_{T_n} = \lim(A,D)_{E_{T_n}} =
(A,D)_{E_T} = (H,Y)_{T}.
\]
The two cases together imply that $(H,Y)_{T-} = (H,Y)_{T}$ a.s.
\end{pf*}

For the proof of Theorem~\ref{th:AXembed}, we will need the following lemma.

\begin{lemmas} \label{lem:jumps-ex}
Let $(\chi,\tau)\in\mathbb{R}^{d+1}$,
and let $t \ge\tau$.
Then for every bounded measurable $f$ defined on $\mathbb
{R}^{d+1}\times
\mathbb{R}^{d+1}
$, we have $\pr^{\chi,\tau}$-a.s.:
\begin{eqnarray*}
&&\E^{\chi,\tau}\bigl[f\bigl(X_{t-},G_{t-};
\Delta(A,D)_{E(t)}\bigr)|\mathcal G_t\bigr]
\\
&&\qquad= \int_{\mathbb{R}^{d+1}} K_{V_{t-}}(X_{t-},G_{t-};dx
\times dz) f(X_{t-},G_{t-};x,z).
\end{eqnarray*}
\end{lemmas}

\begin{pf}
Since $(X_{t-},G_{t-})$ are $\mathcal G_t$-measurable, by a monotone
class argument and dominated convergence, it suffices to prove the formula
%
\begin{equation}
\label{lemmalemma} \E^{\chi,\tau}\bigl[f\bigl(\Delta(A,D)_{E(t)}\bigr)|
\mathcal G_t\bigr] = \int_{\mathbb{R}^{d+1}}
K_{V_{t-}}(X_{t-},G_{t-};f)
\end{equation}
for all bounded measurable $f$ defined on $\mathbb{R}^{d+1}$.
As in Lemma~\ref{decomposition}, we consider the two cases
$\{V_{t-}=0\}$ and $\{V_{t-}>0\}$.
On $\{V_{t-}=0\}$, we have $\Delta(A,D)_{E_t}=(0,0)$, $\pr^{\chi
,\tau
}$-a.s., and hence
%
\begin{eqnarray}
\label{eq:A-=0} 
&&\E^{\chi,\tau}\bigl[f(\Delta(A,D)_{E(t)}
\1\{V_{t-}=0\} |\mathcal G_t\bigr]
\nonumber
\\[-8pt]
\\[-8pt]
\nonumber
&&\qquad=f(0,0)
= \delta_{(0,0)}(f) = K_{V_{t-}}(X_{t-},G_{t-};f)
\1\{V_{t-}=0\}. 
\end{eqnarray}

On $\{V_{t-} > 0\}$, the process $D$ jumps at $E_t$ (Lemma~\ref
{decomposition}), and since $D$ has increasing sample paths this is
equivalent to
%
\begin{equation}
\label{crossover} \mbox{``there exists a unique number } s >0 \mbox{ such that }
D_{s-} < t \le D_s\mbox{.''}
\end{equation}
We rewrite the restriction of \eqref{lemmalemma} to $\{V_{t-} > 0\}$ in
integral form:
\begin{eqnarray*}
&&\E^{\chi,\tau}\bigl[f\bigl(\Delta(A,D)_{E(t)}\bigr)
\1_C \1\{V_{t-} > 0\}\bigr]
\\
&&\qquad= \E^{\chi,\tau}\bigl[ K_{V_{t-}}(X_{t-},G_{t-};f)
\1_C\1\{V_{t-} > 0\}\bigr], \qquad C \in\mathcal
G_t,
\end{eqnarray*}
where $\1_C(\omega) = 1$ iff $\omega\in C$.
Now we invoke
\cite{DM79}, Theorem IV.67(b), which says that there exists an
$\mathcal
F$-adapted \emph{predictable} process $Z$ such that $\1_C = Z_{E(t)}$.
Then it suffices to show that for every $\mathcal F$-adapted
predictable process $Z$, the following two random variables have the
same expectation with respect to $\pr^{\chi,\tau}$:
%
\begin{eqnarray}
\label{LHS}&& f\bigl(\Delta(A,D)_{E(t)}\bigr) Z_{E(t)} \1
\{V_{t-} > 0\},
\nonumber
\\[-8pt]
\\[-8pt]
\nonumber
&& K_{V_{t-}}f(X_{t-},G_{t-}) Z_{E(t)} \1
\{V_{t-} > 0\}.
\end{eqnarray}
We begin on the right-hand side and find, using \eqref{crossover} and
$X_{t-} = A_{E_t-}$, $G_{t-} = D_{E_t-}$
\begin{eqnarray*}
& &\E^{\chi,\tau} \bigl[ K_{V_{t-}}(X_{t-},G_{t-};f)Z_{E_t}
\1\{V_{t-}>0\} \bigr]
\\
&&\qquad= \E^{\chi,\tau} \bigl[ K_{t-D_{E_t-}}(A_{E_t-},D_{E_t-};f)Z_{E_t}
\1\{D_{E_t-} < t\} \bigr]
\\
&&\qquad=\E^{\chi,\tau} \biggl[ \sum_{s>0}
K_{t-D_{s-}}(A_{s-},D_{s-};f)Z_s \1
\{D_{s-}<t \le D_s\} \biggr]
\\
&&\qquad=\E^{\chi,\tau} \biggl[\sum_{s>0}
K_{t-D_{s-}}(A_{s-},D_{s-};f)Z_s\1
\{D_{s-}<t\} \1\{\Delta D_s \ge t-D_{s-}\}
\biggr]
\\
&&\qquad= \E^{\chi,\tau} \bigl[W(\cdot,s;y,w) \mu(\cdot,ds;dy,dw) \bigr] = \cdots,
\end{eqnarray*}
where the optional random measure $\mu$ is as in \eqref
{randomMeasure} and
\begin{eqnarray*}
W(\omega,s;y,w)
&=& K_{t-D_{s-}(\omega)} \bigl(A_{s-}(\omega), D_{s-}(
\omega);f\bigr)Z(s,\omega)
\nonumber
\\[-8pt]
\\[-8pt]
\nonumber
&&{}\times\1\bigl\{D_{s-}(\omega) < t\bigr\} \1\bigl
\{w \ge t-D_{s-}(\omega)\bigr\}
\end{eqnarray*}
is a predictable integrand.
The compensation formula \cite{JacodShiryaev}, II.1.8, and \eqref
{compensator} then yield
\begin{eqnarray*}
\cdots&=& \E^{\chi,\tau}\bigl[W(\cdot,s;y,w) \mu^p(\cdot,ds;dy,dw)
\bigr]
\\
&= &\E^{\chi,\tau} \biggl[\int_0^\infty
K_{t-D_{s-}} (A_{s-},D_{s-};f)Z_s\1
\{D_{s-} < t\}\\
&&\hspace*{45pt}{} \times K \bigl(A_{s-},D_{s-};\mathbb{R}^d
\times(t-D_{s-},\infty) \bigr)\,ds \biggr].
\end{eqnarray*}
Using the definition of $K_v$ \eqref{eq:K_a}, this equals
%
\begin{eqnarray}\label{RHSend}
&=& \E^{\chi,\tau} \biggl[\int_0^\infty
\int_{\mathbb{R}^d\times
[t-D_{s-},\infty)} K(A_{s-},D_{s-};dy,dw) f(y,w)
Z_s\nonumber\\
&&\hspace*{183pt}{}\times \1\{D_{s-} <t\} \,ds \biggr]
\nonumber
\\[-8pt]
\\[-8pt]
\nonumber
 &=& \E^{\chi,\tau} \biggl[\int_0^\infty
\int_{\mathbb{R}^{d+1}} K(A_{s-},D_{s-};dy,dw)f(y,w)
Z_s \\
&&\hspace*{90pt}{}\times \1\{D_{s-}<t\le D_{s-}+w\}\,ds \biggr].\nonumber
\end{eqnarray}
Proceeding similarly with the left-hand side of \eqref{LHS},
we find
\begin{eqnarray}\label{LHSend}
\nonumber
&& \E^{\chi,\tau} \bigl[f\bigl(\Delta(A,D)_{E(t)}\bigr)
Z_{E(t)}\1\{ V_{t-}>0\} \bigr],
\\
&&\qquad= \E^{\chi,\tau} \biggl[\sum_{s >0}f
\bigl(\Delta(A,D)_s\bigr)Z_s \1\{D_{s-}<t\le
D_{s-}+\Delta D_s\} \biggr]
\nonumber
\\[-8pt]
\\[-8pt]
\nonumber
\nonumber
&&\qquad= \E^{\chi,\tau}\bigl[\tilde W(\cdot,s;y,w) \mu(\cdot,ds;dy,dw)
\bigr]
\\
 &&\qquad= \E^{\chi,\tau}\bigl[\tilde W(\cdot,s;y,w)
\mu^p(\cdot,ds;dy,dw)\bigr],\nonumber
\end{eqnarray}
where $\tilde W(\omega,s;y,w) = f(y,w) Z_s(\omega) \1\{D_{s-}(\omega
) <
t \le D_{s-}(\omega) + w\}$.
We check that \eqref{LHSend} and \eqref{RHSend} are equal.
Hence, we have shown
%
\begin{eqnarray}
\label{eq:A->0} &&\E^{\chi,\tau} \bigl[f\bigl(\Delta(A,D)_{E(t)}\bigr) \1
\{V_{t-} > 0\}|\mathcal G_t \bigr]
\nonumber
\\[-8pt]
\\[-8pt]
\nonumber
&&\qquad= K_{V_{t-}}f(X_{t-},G_{t-})
\1\{V_{t-} > 0\},
\end{eqnarray}
and adding equations \eqref{eq:A-=0} and \eqref{eq:A->0} yields
\eqref
{lemmalemma}.
\end{pf}

For later use, we note the formula
%
\begin{eqnarray}
\label{eq:kdarts} 
&&K_{v+t}(x,z;C) K_v\bigl(x,z;
\mathbb{R}^d\times[v+t,\infty)\bigr)
\nonumber
\\[-8pt]
\\[-8pt]
\nonumber
&&\qquad= K_v(x,z;C)\qquad (x,z)\in\mathbb{R}^{d+1}, v,t \ge0,
\end{eqnarray}
valid for all Borel-sets $C \subset\mathbb{R}^d\times[v+t,\infty)$.

\begin{pf*}{Proof of Theorem~\ref{th:AXembed}}
We begin with statement (ii).
We consider the two cases $H_s \ge t$ and $H_s < t$.
On the set $\{H_s \ge t\}$, $E$ is constant on the interval $[s,t]$,
and hence we have $(G,X)_{t-} = (G,X)_{s-}$.
Using $G_{s-} + \Delta D_{E_s} = H_s$ and Lemma~\ref{lem:jumps-ex}, we
calculate
%
\begin{eqnarray}
\label{eq:Ds>t2} 
&&\E^{\chi,\tau}\bigl[f(X_{t-},V_{t-})
\1\{H_s\ge t\}|\mathcal G_s\bigr]\nonumber
\\
&&\qquad= f(X_{s-},t-G_{s-}) \pr^{\chi,\tau}(H_s
\ge t|\mathcal G_s)\nonumber
\\
&&\qquad= f(X_{s-},t-s+V_{s-}) \pr^{\chi,\tau}(\Delta
D_{E_s}\ge t - G_{s-} |\mathcal G_s)
\\
&&\qquad=f(X_{s-},t-s+V_{s-}) K_{V_{s-}}
\bigl(X_{s-},G_{s-};[t-G_{s-},\infty )\times
\mathbb{R}^d\bigr)\nonumber
\\
&&\qquad=f(X_{s-},V_{s-}+t-s) K_{V_{s-}}
\bigl(X_{s-},s-V_{s-};[t-s+V_{s-},\infty )
\times\mathbb{R}^d\bigr),\nonumber 
\end{eqnarray}
which corresponds to the first summand in \eqref{evolutionP}.

We now turn to the case $H_s < t$, and recall the shift operators
$\theta_t$ from \eqref{eq:vartheta}.
For the left-continuous version of $(A,D)$, we can write
\[
(A,D)_{t-}(\theta_s\omega) = (A,D)_{s+t-}(
\omega),\qquad s \ge0, t >0.
\]
Note that we had to assume $t >0$ above, for the left-hand limit to be defined.
We find now, similarly to \eqref{eq:vartheta-inv},
\[
(A,D)_{E_t-} \circ\theta_{E_s} = (A,D)_{E_t-}
\]
on $\{H_s < t\}$.
Indeed, by \eqref{eq:Eshift}, $E_t(\omega) = E_s(\omega) +
E_t(\theta
_{E_s}\omega)$, and so
\begin{eqnarray*}
(A,D)_{E_t-}(\theta_{E_s}\omega) &=& (A,D)_{E_t(\theta_{E_s}\omega)-}(
\theta_{E_s}\omega) = (A,D)_{E_s(\omega)+E_t(\theta_{E_s}\omega)-}(\omega)
\\
&= &(A,D)_{E_t(\omega)-}(\omega) = (A,D)_{E_t-}(\omega).
\end{eqnarray*}
If $t > 0 $ and $H_s(\omega)< t$, then by \eqref{eq:Eshift}
$E_t(\theta_{E_s}\omega) = E_t(\omega) - E_s(\omega) > 0$, and the
left-hand limit is well defined.
Thus, we have shown that on the set $\{H_s<t\}$ we have
$(X_{t-},V_{t-}) = (X_{t-},V_{t-}) \circ\theta_{E_s}$.
We will use the strong Markov property of $(A,D)$ in the following form:
\[
\E^{\chi,\tau}[F \circ\theta_T | \mathcal F_T] =
\E^{A_T,D_T}[F], \qquad \pr^{\chi,\tau}\mbox{-a.s.}, 
\]
valid for all $\mathcal F$-stopping times $T$ and random variables
$F$ on $(\Omega, \mathcal F_\infty, \pr^{\chi,\tau})$. Using Lemma~\ref
{lem:jumps-ex} and the strong Markov property at $E_s$, we then calculate
%
\begin{eqnarray}
\label{eq:D<t2}&& \E^{\chi,\tau} \bigl[f(X_{t-},V_{t-})\1
\{H_s<t\}|\mathcal G_s \bigr]\nonumber
\\
&&\qquad= \E^{\chi,\tau} \bigl[\E^{\chi,\tau}\bigl[f(X_{t-},V_{t-})
\circ \theta _{E_s}|\mathcal H_s\bigr]\1
\{H_s<t\}|\mathcal G_s \bigr]\nonumber
\\
&&\qquad= \E^{\chi,\tau} \bigl[\E^{Y_s,H_s}\bigl[f(X_{t-},V_{t-})
\bigr]\1\{H_s<t\} |\mathcal G_s \bigr]\nonumber
\\
&&\qquad= \E^{\chi,\tau} \bigl[\E^{(X,G)_{s-}+\Delta(A,D)_{E_s}} \bigl[f(X_{t-},V_{t-})
\bigr]\1\{G_{s-} + \Delta D_{E_s}<t\}|\mathcal
G_s \bigr]
\nonumber
\\
&&\qquad= \int_{\mathbb{R}^{d+1}} K_{V_{s-}}(X_{s-},G_{s-};dy
\times dw)
\\
&&\qquad\quad{}\times \E^{(X,G)_{s-}+(y,w)} \bigl[f(X_{t-},V_{t-})\bigr]\1
\{G_{s-}+w<t\}\nonumber
\\
&&\qquad= \int_{ \mathbb{R}^d\times[V_{s-}, V_{s-}+t-s) } K_{V_{s-}}(X_{s-},
s-V_{s-};dy\times dw)\nonumber
\\
&&\qquad\quad{}\times \E^{X_{s-}+y,s-V_{s-}+w} \bigl[f(X_{t-},V_{t-})\bigr],\nonumber
\end{eqnarray}
which corresponds to the second summand in \eqref{evolutionP}.
Adding equations \eqref{eq:Ds>t2} and~\eqref{eq:D<t2} yields
statement (ii).
For statement (i), we calculate
\begin{eqnarray*}
&&P_{r,s}P_{s,t}f(x,v)\\
&&\qquad = P_{s,t}f(x,v+s-r)
K_v\bigl(x,r-v;\mathbb{R}^d\times[{v+s-r},\infty)\bigr)
\\
&&\qquad\quad+ \int_{\mathbb{R}^d\times[v,v+s-r)} K_v(x,r-v;dy\times dw)
\E^{x+y,r-v+w}\bigl[P_{s,t}f(X_{s-},V_{s-})
\bigr]
\\
&&\qquad= K_v \bigl(x,r-v;\mathbb{R}^d\times[v+s-r,\infty
) \bigr)
\\
&&\qquad\quad{}\times \biggl\{f(x,v+t-r) K_{v+s-r}\bigl(x,r-s;\mathbb{R}^d\times
[{v+t-r},\infty)\bigr)
\\
&&\hspace*{18pt}\qquad\quad{} + \int_{\mathbb{R}^d\times[v+s-r,v+t-r)} K_{v+s-r}(x,r-v;dy\times dw)
\\
&&\hspace*{126pt}\qquad\quad{}\times\E^{x+y,r-v+w}\bigl[f(X_{t-},V_{t-})\bigr] \biggr\}
\\
&&\qquad\quad{}+\int_{\mathbb{R}^d\times[v,v+s-r)} K_v(x,r-v;dy\times dw)
\E^{x+y,r-v+w} \bigl[P_{s,t}f(X_{s-},V_{s-})
\bigr] \\
&&\qquad= \cdots.
\end{eqnarray*}
Using \eqref{eq:kdarts} and applying the statement (ii) with
$(\chi,\tau) = (x+y,r-v+w)$ yields
\begin{eqnarray*}
\cdots&=& f(x,v+t-r) K_v\bigl(x,r-v;\mathbb{R}^d
\times[v+t-r,\infty)\bigr)
\\
&&{}+ \int_{\mathbb{R}^d\times[v+s-r,v+t-r)} K_v(x,r-v;dy\times dw)
\E^{x+y,r-v+w}\bigl[f(X_{t-},V_{t-})\bigr]
\\
&&{}+ \int_{\mathbb{R}^d\times[v, v+s-r)} K_v(x,r-v;dy\times dw)
\\
&&\hspace*{40pt}\qquad\quad{}\times\E^{x+y,r-v+w} \bigl[\E^{x+y,r-v+w}\bigl[f(X_{t-},V_{t-})|
\mathcal G_s\bigr] \bigr]
\\
&=& P_{r,t}f(x,v),
\end{eqnarray*}
which is statement (i).
\end{pf*}
\end{appendix}

%

%


\printaddresses

\end{document}